\documentclass[12pt]{article}%
\usepackage{amssymb,amsfonts,amsmath}
\usepackage[nohead]{geometry}
\usepackage[singlespacing]{setspace}
\usepackage[round]{natbib}
\usepackage{color}
\usepackage{graphicx}%
\usepackage{subcaption}
\graphicspath{{figures/}}
\newcounter{condition}
\newtheorem{theorem}{Theorem}[section]
\newtheorem{condition}{Condition}[section]
\newtheorem{corollary}[theorem]{Corollary}

\newtheorem{lemma}[theorem]{Lemma}
\newtheorem{proposition}[theorem]{Proposition}
\newtheorem{remark}[theorem]{Remark}
\newenvironment{proof}[1][Proof]{\noindent\textbf{#1.} }{\ \rule{0.5em}{0.5em}}

\numberwithin{equation}{section}
\numberwithin{theorem}{section}
\numberwithin{condition}{section}

\newcommand{\Cov}{\operatorname{Cov}}

\renewcommand{\P}{\mathbb{P}}
\newcommand{\E}{\mathbb{E}}
\renewcommand{\mathbf}{\boldsymbol}

\newcommand{\abs}[1]{\left\vert#1\right\vert}
\newcommand{\set}[1]{\left\{#1\right\}}
\newcommand{\suit}[1]{\left(#1\right)}

\newcommand{\eps}{\varepsilon}
\renewcommand{\ll}[1]{\mathbf{1}{\set{#1}}}

\makeatletter
\def\@biblabel#1{\hspace*{-\labelsep}}
\makeatother
\begin{document}

\title{\bf Trends in tail dependence of heteroscedastic extremes}

\author{John H.J.\ Einmahl\thanks{\textit{E-mail address:} j.h.j.einmahl@uvt.nl} \vspace{-0.6 cm}\\\textit{Tilburg University}
\vspace{-0.4 cm}\and Chen Zhou\thanks{Postal address: Econometric Institute, Erasmus University Rotterdam, 3000DR Rotterdam, The Netherlands. \textit{E-mail address:} zhou@ese.eur.nl} \vspace{-0.6 cm}\\\textit{Erasmus University Rotterdam}}

\maketitle
\thispagestyle{empty}
\doublespacing
\begin{abstract}
\noindent \singlespacing
\vspace{-0.4 cm}
We consider multivariate extreme value statistics for independent but
nonidentically distributed random vectors. In particular, the data may
have varying tail copulas and also heteroscedastic marginal
distributions. Assuming smoothly changing tail copulas, we propose a
nonparametric estimator for the integrated tail copula
and establish its asymptotic behavior.
Notably, the heteroscedastic marginals do not affect the
limiting processes. We use the main result for the integrated
tail copula to test for a constant tail copula across all observations. Finally, a simulation study shows the good finite-sample behavior of our limit theorems as well as high power of the test.
\vspace{3mm}
\\{\textbf{MSC 2020 subject classifications:} Primary 62G32, 62G20; secondary 60F17.}
\vspace{3mm}
\\{\textbf{Keywords:} Functional central limit theorem, Multivariate extremes, Nonidentical distributions, Tail copula, Tail empirical processes.}
\end{abstract}

\sloppy
\newpage
\section{Introduction}

In bivariate extreme value theory the key object is the tail copula which quantifies the important notion of tail dependence.  Estimation of this tail copula is of prime importance for tail inference.  For a comprehensive treatment of multivariate extreme value statistics, see the monographs  \cite{BGST} and \cite{dHF}.  For statistical analyses along these lines it  is typically assumed that the observations are identically distributed, which is too restrictive for various applications.   Therefore  in the present paper we allow the data to come from different distributions and in particular to have different tail copulas as well as heteroscedastic (see below) marginal distributions.  Assuming that the tail copula is  smoothly  changing  over ``time'', we  propose a nonparametric estimator for the integrated tail copula  as well as one for the tail copula at a  fixed time. Note that such a time varying tail copula is known to appear in various fields of application, in particular in financial markets, oil markets, and climate systems, see, e.g.,  \cite{HH}.

To be more precise, let us specify our setting. Consider independent observations $\set{(X_i,Y_i)}_{i=1}^n$, where the continuous marginal distribution functions of $(X_i,Y_i)$ are $F_i$ and $G_i$, respectively. We model the marginal distributions by the heteroscedastic extremes setup as in \cite{EdHZ}. Assume that there exist two continuous  distribution functions $F_0$ and $G_0$ with endpoints $x^*:=\sup\{x:F_0(x)<1\}$ and $y^*:=\sup\{y:G_0(y)<1\}$, two positive,  continuous functions $c_X$ and $c_Y$, defined on the interval  $[0,1]$,  such that 
\begin{equation*}
   \lim_{x\to x^*}\frac{1-F_i(x)}{1-F_0(x)}=c_X\suit{\frac{i}{n}} \text{\ \ \ and \ \ \ }\lim_{y\to y^*}\frac{1-G_i(x)}{1-G_0(x)}=c_Y\suit{\frac{i}{n}},
   \end{equation*}
hold uniformly for $1\leq i\leq n$ and $n\geq 1$.

We denote the distribution function of  $(1-F_i(X_i),1-G_i(Y_i))$ with $C_i$, a (survival) copula.  Each $C_i$ defines a tail copula, while there is smooth variation of these tail copulas across $1\leq i\leq n$. This is formalized as follows. 
Assume that there exists a function $R(u,v;s)$, defined on $[0,\infty )^2\times[0,1]$, which is a tail copula for each $s\in [0,1]$, such that for each  $T>0$,
\begin{equation} \label{eq:definition of R}
    \lim_{t\to 0}\frac{1}{t}C_i(tu,tv)=R\suit{u,v;\frac{i}{n}},    
\end{equation}
holds uniformly for $(u,v)\in[0,T]^2$, $1\leq i\leq n$ and $n\geq 1$. 

 First we estimate the function $R(u,v;s)$ locally for fixed $s$.  
 Next define  the integrated tail copula $$I_R(u,v;s)=\int_0^s R(u,v;w)dw.$$ Observe that $I_R(u,v;1)$ denotes the average tail copula,  representing the average tail dependence.   For general $s\in [0,1] $, we estimate $I_R(u,v;s)$ by adding up local estimators. 
 The latter estimator is particularly important in testing trends in  tail dependence. The precise definitions of both estimators are given in the next section.

The main goal of this paper is to establish the novel theory on the asymptotic behavior of these estimators, revealing in particular the non-trivial limiting process of the standardized estimator of the integrated tail copula. Obviously, the present flexible model and the asymptotic behavior of the estimators pave the way for  extreme value statistics in the very relevant situation that the tail dependence structure is time varying. Interestingly, the heteroscedastically varying marginals do not show up in the limiting processes and hence need not be estimated for tail inference.   We use the powerful result  for the integrated tail copula estimator in particular to test if there is indeed a time changing tail copula or that it remains constant over time. The proof of the main limit theorem was very challenging due to the variation in tail dependence and marginal distributions, as well as the diverging number of local estimators used in the integrated tail copula estimator. Handling these challenges  requires   various innovative steps.

In univariate extreme value theory a smoothly changing extreme value index has been studied rather recently in \cite{dHZ}. The motivation, explanation,  and applications given therein for a time changing extreme value index can directly be translated to a changing tail copula, as in the present paper.  

For multivariate tail dependence  related but very different work has been published very recently. In \cite{Drees} the setup where  the data are multivariate regularly varying with a time varying spectral measure and tail index is studied. In such a case the marginal extreme value indices (at a fixed time) need to exist, to be positive and to be equal, which is restrictive and avoided with our approach.
It is well-known and easy to see that the tail copula remains the same when the components of the bivariate data are transformed by  increasing transformations, hence tail heaviness should not play a role when studying a time varying tail copula. \cite{EZ} studies a constant (over time) tail copula, with \textit{equal} scedasis functions for the marginals, that is, only the marginals vary over time, not the tail dependence. 
The setup in  \cite{HH} is quite similar to the present one,   but the focus therein is on testing for equality of the marginal scedasis functions. In that paper the scedasis functions are relevant and appear in the limiting Gaussian process, whereas we consider the scedasis functions as nuisance and ``correctly'' obtain a novel limiting process that does not contain these scedasis functions.

The remainder of this paper is organized as follows. In Section 2 we present the estimators, the conditions, and the main result unraveling the asymptotic behavior of the estimator of $I_R$; we then use this result to test for a constant tail copula. The non-standard, intricate proofs are detailed in Section 3. 
Finally,  the finite-sample performance of the estimator and the  test
is examined in Section 4. 


\section{Main Results}

In this section we first present the nonparametric estimator  of $R(u,v;s)$, locally for given $s\in(0,1)$, and then that of
   the integrated tail copula $I_R(u,v;s)$. Next we state natural conditions for the proof of the main result, the asymptotic normality of the estimator of $I_R$. In particular the relative growth rate between the usual intermediate sequence $k$ and the bandwidth $h$ (partitioning $[0,1]$, the range of $s$) is delicate. In the last subsection we present the aforementioned limit theorem revealing the asymptotic behavior of the estimator of $I_R$ and we discuss tests for a constant -- over $s$ -- tail copula $R$. 

\subsection{The estimators}
Consider an intermediate sequence $k=k(n)$, that is, $k/n\to 0 $ and $k\to\infty$ as $n\to\infty$. In addition, let  $h=h(n)$ be a bandwidth such that $h\to 0$, $kh\to\infty$, as $n\to\infty$, and $nh$ is an integer.

Denote $I_{s,n}=\set{1\leq i\leq n: s-\frac{h}{2}<\frac{i}{n}\leq s+\frac{h}{2}}$. Clearly $\abs{I_{s,n}}=nh$ for all $\frac{h}{2}\leq s\leq 1-\frac{h}{2}$. To estimate $R(u,v;s)$, we use the following local estimator
 \begin{equation} \label{eq:estimator for R function: multivariate}
   \widehat R(u,v;s)=\frac{1}{kh}\sum_{i\in I_{s,n}} \ll{X_i> X^{(s)}_{nh-\lfloor khu\rfloor,nh},Y_i>Y^{(s)}_{nh-\lfloor khv\rfloor ,nh} },
 \end{equation}
 where $X^{(s)}_{nh-\lfloor khu\rfloor ,nh}$ is the $(nh-\lfloor khu \rfloor)-$th highest value among the observations $\set{X_i}_{i\in I_{s,n}}$, $Y^{(s)}_{nh-\lfloor khv \lfloor,nh}$ is defined similarly. 
 The definition in (\ref{eq:estimator for R function: multivariate}) holds for $s\in[\frac{h}{2},1-\frac{h}{2}]$ and $(u,v)\in [0,T]^2$ for $T>0$. For any fixed $s\in (0,1)$, $\widehat R(u,v;s_)$ is well-defined  for large $n$.
 %

 We  add up the local estimators to obtain an estimator for  
 $I_R(u,v;s)=\int_0^s R(u,v;w)dw$. Denote $m:= \lfloor1/h \rfloor$, 
 We consider an alternative estimator of $R(u,v;s)$ for $s\in(0,1]$ as 
 $$\widetilde R(u,v;s)=\begin{cases}
     \widehat R(u,v;(j-1/2)h) & \text{ if } s\in ((j-1)h, jh] \text{ for } j=1,2,\ldots, m;\\
     \widehat R(u,v;(m-1/2)h) & \text{ if }  s\in (mh,1].
 \end{cases}$$ 
 This alternative estimator  is a piecewise constant  version of $\widehat R(u,v;\cdot )$,    that on each interval $((j-1)h, jh]$,  takes the value at its midpoint. Based on $\widetilde R$, we define the estimator for $I_R$ as follows:
\begin{equation} \label{eq:estimator for tilde R function: multivariate}
\widehat I_R(u,v;s)=\int_0^s \widetilde R(u,v;w) dw.
\end{equation}

\subsection{Conditions}
The first condition controls the speed of convergence in the marginal heteroscedastic models. It is a standard compatibility condition assuming away the asymptotic estimation bias,  see e.g. \cite{EdHZ}.
\begin{condition} \label{eq:condition heteroskedastic extremes}
   There exists two eventually non-increasing positive functions $A_X$ and $A_Y$ with $\lim_{t\to\infty}A_X(t)=\lim_{t\to\infty}A_Y(t)=0$, such that as $x\to x^*$ and $y\to y^*$
   \begin{align*}
   &\sup_{n\in\mathbb{N}}\max_{1\leq i\leq n}\abs{\frac{1-F_i(x)}{1-F_0(x)}-c_X\suit{\frac{i}{n}}}=O\suit{A_X\suit{\frac{1}{1-F_0(x)}}},\\
   &\sup_{n\in\mathbb{N}}\max_{1\leq i\leq n}\abs{\frac{1-G_i(y)}{1-G_0(y)}-c_Y\suit{\frac{i}{n}}}=O\suit{A_Y\suit{\frac{1}{1-G_0(y)}}}.
   \end{align*}
\end{condition}
Since $c_X$ is a positive continuous function on $[0,1]$, there exists $0<\underline{c}_X<\overline{c}_X<\infty$ such that $\underline{c}_X\leq c_X(s)\leq \overline{c}_X$ for all $s\in [0,1]$. We further define $\underline{c}_Y$ and $\overline{c}_Y$ in a similar way.

The following condition balances the choices of the intermediate sequence $k$ and the bandwidth $h$. Such conditions are common in the smoothing literature.
\begin{condition} \label{eq:condition on k} As $n\to\infty$,
   $$k\to\infty, \, k/n\to 0,\,  h\to 0,\,  kh^3/\log^3 n\to \infty\text{\ and \ } kh^4\to 0.$$
\end{condition}

The next condition assumes the sequence $k$ is such that the convergences  to the scedasis functions $c_X, c_Y$ and to $R$, respectively, are fast enough to obtain centered limit distributions. 
\begin{condition} \label{eq:condition k extra}
   For all $\tau>0$, the sequence $k:=k(n)$  satisfies, as $n\to\infty$,
   \begin{align}
       &\sqrt{k}A_X\suit{\frac{n}{k\tau }}\to 0 \text{\ \ and\ }\sqrt{k}A_Y\suit{\frac{n}{k\tau }}\to 0,    \label{eq:condition k related to marginals}\\
      &\sqrt{k}\max_{1\leq i\leq n}\sup_{ (u,v)\in[0,\tau]^2}\abs{\frac{n}{k}C_i\suit{\frac{k}{n}u,\frac{k}{n}v}-R\suit{u,v;\frac{i}{n}}}\to 0. \label{eq:condition k related to R}
   \end{align}
   \end{condition}
   
 
Next we assume  some smoothness of the scedasis functions.
\begin{condition} \label{eq:condition smooth of c and R}
The scedasis functions $c_X$ and $c_Y$ are Lipschitz continuous.
   \end{condition}

Lastly, we assume that the partial derivatives of $R(u,v;s)$ with respect to $u$, $v$  and $s$, denoted as $R_{1}(u,v;s)$, $R_2(u,v;s)$, and $R'(u,v;s)$, respectively, are continuous on suitable regions. Further denote $R''(u,v;s):=\frac{\partial}{\partial s} R'(u,v;s)$. 

\begin{condition} \label{eq:condition partial derivatives of R}
   The partial derivative $R_1(u,v;s)$ is continuous on $(0,\infty)\times[0,\infty)\times [0,1]$ and the partial derivative $R_2(u,v;s)$ is continuous on $[0,\infty)\times(0,\infty)\times [0,1]$. Moreover, for each $s$, $R_1(u,v;s)$ and $R_2(u,v;s)$  are Lipschitz-continuous on $[0,\infty)^2\setminus[0, \frac{1}{2})^2$ in $u$ and in $v$, with some constant $K$ not depending on $s$, and, for all $\tau>0$,  $R''(u,v;s)$ is  bounded on $[0,\tau]^2\times [0,1]$.
\end{condition}
In the simulation study, we provide examples of such smooth functions $c_X$, $c_Y$ and $R$.

\subsection{Theorems}

Now we establish the main result, the limit theorem for  $\widehat I_R(u,v;s)$. Let $W_I$ be a centered Gaussian process on $[0,\infty]^2\setminus\{\infty, \infty\}\times  [0,1]$ with covariance structure
\begin{equation}\label{cova}\Cov( W_I(u_1,v_1;s), W_I(u_2,v_2;t))=\int_0^{s\wedge t} R(u_1\wedge u_2, v_1\wedge v_2; w)dw,\end{equation}
 for any $(u_1,v_1,s),(u_2,v_2,t)\in [0,\infty]^2\setminus\{\infty, \infty\}\times [0,1]$.
For the integrated  estimator $I_R$,  we obtain the joint asymptotic behavior at the relatively fast  rate $1/\sqrt{k}$, instead of  $1/\sqrt{kh}$.
   \begin{theorem} \label{theorem:global estimator for multivariate}
      Assume that Conditions \ref{eq:condition heteroskedastic extremes} - \ref{eq:condition partial derivatives of R}  hold. Then for all $T>0$, 
      as $n\to\infty$, \begin{eqnarray*}&&\!\!\!\!\!\!\!\!\!\!\!\!\sqrt{k}\suit{\widehat I_R(u,v;s)-I_R(u,v;s)}
      \\&&\qquad
      \leadsto  W_I(u,v;s)-\int_{0}^s R_1(u,v,w)W_I(u, \infty; dw)-\int_{0}^s R_2(u,v;w)W_I( \infty, v;dw), \end{eqnarray*}
    where $\leadsto$ denotes weak convergence on $D_{[0,T]^2\times[0,1]}$  equipped with the Skorokhod distance.
   \end{theorem}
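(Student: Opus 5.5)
We plan to reduce $\widehat I_R$ to a sum over the $m$ blocks $B_j=I_{(j-1/2)h,n}=\{i:(j-1)h<i/n\le jh\}$ and to analyse each block through a tail empirical process built from the uniform variables $U_i=1-F_i(X_i)$, $V_i=1-G_i(Y_i)$. By construction, for $s\in((j_0-1)h,j_0h]$,
\[
\widehat I_R(u,v;s)=h\sum_{j<j_0}\widehat R(u,v;(j-\tfrac12)h)+\bigl(s-(j_0-1)h\bigr)\widehat R(u,v;(j_0-\tfrac12)h).
\]
The last term and the piece on $(mh,1]$ are $O_P(h)$ uniformly, hence $o_P(k^{-1/2})$ because $kh^4\to0$. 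We therefore concentrate on the partial sums. Define
\[
S_n(u,v;s)=\frac1{k}\sum_{i\le ns}\ll{U_i<\tfrac kn u\,a_i,\ V_i<\tfrac kn v\,b_i},
\]
allowing random, block-dependent rescalings $a_i,b_i$. The first step is to prove a functional CLT for the unrescaled process: $\sqrt k\,(S_n-\E S_n)$ with $a_i=b_i=1$ converges to $W_I$. Since the summands are independent, this follows from a Lindeberg/bracketing argument for triangular-array empirical processes indexed by $(u,v,s)$. The covariance converges to $\int_0^{s\wedge t}R(u_1\wedge u_2,v_1\wedge v_2;w)dw$ by \eqref{eq:definition of R} and Condition \ref{eq:condition k extra}, and tightness holds because the indicator class is VC. The marginal processes $W_I(u,\infty;\cdot)$ and $W_I(\infty,v;\cdot)$ are the limits of the corresponding one-dimensional sums.

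The second step handles the random thresholds. Inside block $j$ the event $X_i>X^{(s_j)}_{nh-\lfloor khu\rfloor,nh}$ equals $1-F_0(X_i)<\hat\xi_j(u)$ for a random level $\hat\xi_j(u)$. By Condition \ref{eq:condition heteroskedastic extremes} together with \eqref{eq:condition k related to marginals}, we have $1-F_i(x)=c_X(i/n)(1-F_0(x))(1+o(k^{-1/2}))$ in the relevant range. The Lipschitz property of $c_X$ (Condition \ref{eq:condition smooth of c and R}) lets us replace $c_X(i/n)$ by $c_X(s_j)$ within a block, at a cost of $O(h)$ relative error. The block-level scedasis then cancels: the local quantile inverts $\frac1{kh}\sum_{i\in B_j}\ll{U_i<\frac kn c_X(s_j)\,x}$, so $\widehat R(u,v;s_j)$ equals the bivariate block tail process evaluated at $(\hat u_j,\hat v_j)$. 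Here $\hat u_j=u-\Delta^X_j(u)+\dots$, and $\Delta^X_j$ is the centred marginal block process. This explains why $c_X,c_Y$ are absent from the limit. Inverting, via a Vervaat-type argument block by block, we obtain
\[
\widehat R(u,v;s_j)-R(u,v;s_j)\approx \Delta_j(u,v)-R_1(u,v;s_j)\Delta^X_j(u)-R_2(u,v;s_j)\Delta^Y_j(v)+\text{bias},
\]
where $\Delta_j$ is the centred bivariate block process. Multiplying by $h$ and summing over $j<j_0$ gives $W_I$ minus discrete Stieltjes sums $\sum_j R_1(u,v;s_j)\bigl[W^X(u,jh)-W^X(u,(j-1)h)\bigr]$. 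By continuity of $R_1$ in $s$ and the first step, these converge to the integrals $\int_0^s R_1\,W_I(u,\infty;dw)$; we interpret the latter via integration by parts, using that $R'$ exists.

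The remaining bias has two parts. One is the bias from \eqref{eq:condition k related to R}, which is $o(k^{-1/2})$. The other is the midpoint-rule error $h\sum_j R(u,v;s_j)-\int_0^s R\,dw$, which is $O(h^2)$ because $R''$ is bounded. Since $kh^4\to0$, this is $o(k^{-1/2})$.

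The main obstacle is the second-order linearization remainder, which must be controlled uniformly over a diverging number $m\sim1/h$ of blocks. Each block's errors are of size $(kh)^{-1/2}$. The quadratic term from linearizing $R$ around $(u,v)$ is $O((kh)^{-1})$ per block, and summing with weight $h$ gives $O(1/k)$, which is fine. The delicate part is the oscillation term $\Delta_j(\hat u_j,\hat v_j)-\Delta_j(u,v)$. We plan to bound it with a maximal inequality for local oscillations of the tail empirical process (e.g.\ Bernstein plus chaining) of order $(kh)^{-3/4}\sqrt{\log n}$, uniformly in $j$ via a union bound over $m$ blocks. Summed with weight $h$, this gives $(kh)^{-3/4}\log^{1/2}n$. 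Requiring this to be $o(k^{-1/2})$ amounts to $kh^3/\log^{2}n\to\infty$, which is essentially where Condition \ref{eq:condition on k} enters. If this crude bound fails, the fallback is to exploit the independence across blocks. The oscillation terms are nearly centred, so their sum behaves like $\sqrt m$ times the typical size rather than $m$ times, which gives a stronger estimate. The non-smoothness of $R_1$ near the origin, where Lipschitz continuity only holds off $[0,\frac12)^2$, must be handled separately for small $u,v$, with crude monotonicity bounds there.
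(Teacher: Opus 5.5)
The gap is a wrong rate claim that propagates into the scedasis step. Your overall route matches the paper's in most respects: block sums, reduction to uniforms via Condition~\ref{eq:condition heteroskedastic extremes} and \eqref{eq:condition k related to marginals}, Vervaat-type linearisation of the block quantiles, oscillation control uniformly over the $m$ blocks (exactly where $kh^3/\log^3 n\to\infty$ enters), and an $O(h^2)$ midpoint bias. But you assert that $O_P(h)$ terms are $o_P(k^{-1/2})$ ``because $kh^4\to0$''. That only gives $h=o(k^{-1/4})$. In fact Condition~\ref{eq:condition on k} forces $kh^2\geq kh^3\to\infty$, i.e.\ $\sqrt{k}\,h\to\infty$. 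So no error of order $h$ in $\widehat I_R$ is negligible; only errors of order $h^2$ are.

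For the incomplete last block this is repairable by centring. You have $\sqrt{k}\,(s-(j_0-1)h)\bigl(\widehat R(u,v;s_{j_0})-R(u,v;s_{j_0})\bigr)=O_P(\sqrt{h\log n})$, and the rest is $O(\sqrt{k}h^2)$. The uncentred piece itself, however, cannot be dropped. The serious casualty is the scedasis step. Replacing $c_X(i/n)$ by $c_X(s_j)$ ``at a cost of $O(h)$ relative error'' moves the threshold of every observation in a block by a deterministic relative amount of order $h$. That shifts each $\widehat R(u,v;s_j)$ by $O(h)$, and hence $\sqrt{k}(\widehat I_R-I_R)$ by $O(\sqrt{k}\,h)\to\infty$. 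This error never reappears in your bias accounting. Sandwiching $c_X(i/n)$ between $c_X(s_j)(1\pm Lh)$ only brackets $\sqrt{k}(\widehat I_R-I_R)$ within a window of width of order $\sqrt{k}\,h$.

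What is missing is the first-order cancellation of Proposition~\ref{prop: uniform convergence of I3} (the term $T_3$). The block quantile normalises by the block average $b_{X,j}=\frac1{nh}\sum_{i\in I_j}c_X(i/n)$, not by $c_X(s_j)$. The centring of block $j$ is $\frac1{nh}\sum_{i\in I_j}R\bigl(c_X(i/n)\hat u_j,c_Y(i/n)\hat v_j;s_j\bigr)$, where $\hat u_j,\hat v_j$ are the rescaled block quantiles with $b_{X,j}\hat u_j\approx u$ and $b_{Y,j}\hat v_j\approx v$. Expand around $(b_{X,j}\hat u_j,b_{Y,j}\hat v_j)$. The linear term vanishes identically because $\sum_{i\in I_j}(c_X(i/n)-b_{X,j})=0$. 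The remainder is $O(h^2)$ per block by Lipschitz continuity of $c_X,c_Y$ and of $R_1,R_2$ off $[0,\frac12)^2$, transported to all arguments by homogeneity of order $0$. Summing gives $O(\sqrt{k}h^2)\to0$. This is precisely why Condition~\ref{eq:condition smooth of c and R} and the Lipschitz part of Condition~\ref{eq:condition partial derivatives of R} are imposed. A similar second-order argument in $s$ (bounded $R''$, $\sum_{i\in I_j}(i/n-s_j)=h/2$) is used to replace $R(\cdot;i/n)$ by $R(\cdot;s_j)$ in Proposition~\ref{prop: uniform convergence of I2}.

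A secondary gap is your last step. An FCLT for $W_I$ plus continuity of $R_1$ in $s$ does not give process convergence of $\sum_j R_1(u,v;s_j)\bigl[W^X(u,jh)-W^X(u,(j-1)h)\bigr]$. These maps are not uniformly sup-norm continuous unless $s\mapsto R_1(u,v;s)$ has bounded variation uniformly in $(u,v)$. That would require control of $\partial_s R_1$, not of $R'$, and it is not assumed. The paper instead treats $\widetilde T_1+\widetilde T_4$ as a single sum of independent processes. It proves finite-dimensional convergence by Lindeberg with an explicit covariance computation, and tightness via Theorem~3 of \citet{es21}. Small $u$ is treated separately to cope with the discontinuity of $R_1$ at the origin.
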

Note that the limit process is non-degenerate  if $R(1,1;s)>0$, for some $s \in [0,1]$.

There are two interesting special cases resulting from this theorem.
When taking $u=v=1$, $R(1,1;s)$ is the so-called tail dependence coefficient. Correspondingly, the integrated tail dependence coefficient $I_R(s):=I_R(1,1;s)$ can be estimated by $\widehat I_R(s)=\widehat I_R(1,1;s)$. From  Theorem \ref{theorem:global estimator for multivariate}, we immediately obtain that
$$\sqrt{k}\suit{\widehat I_R(s)-I_R(s)}\leadsto W\suit{\int_0^s \sigma^2(w)dw},$$
where $\sigma^2(w)=(1-R(1,1;w))(R(1,1;w)-2R_1(1,1;w)R_2(1,1;w))$ and $W$ is a standard univariate Wiener process. On the other hand, when taking $s=1$, we obtain  $\widehat I_R(u,v;1)$, an estimator of the average tail copula across all observations. 

   To test the null hypothesis that there is no trend in $R$, i.e., $R(u,v;s)\equiv R(u,v)$ for all $(u,v) \in [0,T]^2, \, s\in[0,1]$, we consider the thus obtained special case $I_R(u,v;s)=sR(u,v)$. The limit process in Theorem \ref{theorem:global estimator for multivariate} then reduces to $$W_I(u,v;s)-R_1(u,v)W_I(u,\infty;s)-R_2(u,v)W_I(\infty,v;s),$$ where $R_1(u,v)$ and $R_2(u,v)$ are the partial derivatives of $R(u,v)$ with respect to $u$ and $v$, respectively, and where
 the covariance structure
   in (\ref{cova}) now specializes to $$\Cov( W_I(u_1,v_1;s), W_I(u_2,v_2;t))=R(u_1\wedge u_2, v_1\wedge v_2)(s\wedge t) ,
   $$
   that is, $W_I$ is a Wiener process with as trivariate ``time''  the $R$-measure times Lebesgue measure.   Define on  $([0,\infty]^2\setminus\{\infty, \infty\}\times  [0,1]$,
  $$B_I(u,v;s)=W_I(u,v;s)-sW_I(u,v;1).  $$ Theorem \ref{theorem:global estimator for multivariate} now immediately yields  the following result  for this special case. \begin{corollary}\label{corollary:global estimator no trend}
      Assume that Conditions \ref{eq:condition heteroskedastic extremes} - \ref{eq:condition partial derivatives of R}  hold and that $R(u,v;s)\equiv R(u,v)$ for all $s\in[0,1]$. 
      Then, as $n\to\infty$,
      \begin{multline*}\sqrt{k}\suit{\widehat I_R(u,v;s)-s\widehat I_R(u,v;1)}\\
      \leadsto  B_I(u,v;s)- R_1(u,v)B_I(u,\infty;s)- R_2(u,v)B_I(\infty,v;s)=:B(u,v;s),
      \end{multline*}
    where $\leadsto$ denotes weak convergence on $D_{[0,T]^2\times[0,1]}$  equipped with the Skorokhod distance,
    and hence
    \begin{eqnarray*}
   &&\label{bee}\sup_{(u,v,s)\in [0,T]^2\times[0,1]}
   \left|\sqrt{k}\suit{\widehat I_R(u,v;s)-s\widehat I_R(u,v;1)}\right| \stackrel{d}{\to} \sup_{(u,v,s)\in [0,T]^2\times[0,1]}\left|B(u,v;s)
     \right|, 
   \\ \nonumber\\
&&\label{bee2} \!\!\!\!\!\!\!\int_0^1\int_0^T\int_0^T\!
   \left(\sqrt{k}\suit{\widehat I_R(u,v;s)-s\widehat I_R(u,v;1)}\right)^2\!dudvds\stackrel{d}{\to} \int_0^1\int_0^T\int_0^T\!\!B^2(u,v;s)
     dudvds. 
   \end{eqnarray*}
   
      \end{corollary}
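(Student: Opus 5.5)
The corollary follows from Theorem \ref{theorem:global estimator for multivariate} by the continuous mapping theorem; the work lies in checking that the relevant maps are continuous at the limit. Under the null hypothesis $I_R(u,v;s)=sR(u,v)$, so
\begin{equation*}
\sqrt{k}\suit{\widehat I_R(u,v;s)-s\widehat I_R(u,v;1)}=Z_n(u,v;s)-sZ_n(u,v;1),
\end{equation*}
where $Z_n:=\sqrt{k}(\widehat I_R-I_R)$. By the theorem, $Z_n\leadsto Z$ on $D_{[0,T]^2\times[0,1]}$. Since $R_1,R_2$ do not depend on $w$, the limit is
\begin{equation*}
Z(u,v;s)=W_I(u,v;s)-R_1(u,v)W_I(u,\infty;s)-R_2(u,v)W_I(\infty,v;s).
\end{equation*}

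Consider the map $\phi(z)(u,v;s)=z(u,v;s)-sz(u,v;1)$ on $D_{[0,T]^2\times[0,1]}$. I would check that $\phi$ is continuous at every continuous $z$. If $z_n\to z$ in the Skorokhod distance and $z$ is continuous, then $z_n\to z$ uniformly. In particular $z_n(\cdot,\cdot;1)\to z(\cdot,\cdot;1)$ uniformly, so $\phi(z_n)\to\phi(z)$ uniformly.

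The limit $Z$ has a.s.\ continuous paths, so the continuous mapping theorem applies. Continuity of $Z$ holds because $W_I$ is a Wiener process with control measure $R\times$Lebesgue; its intensity $R(u,v)$ is continuous and Lipschitz, which gives a continuous version by standard chaining. The coefficients $R_1,R_2$ are continuous on the relevant region and bounded by $1$. Near the axes $u=0$ or $v=0$, $R_1$ may be discontinuous, but there the multiplied process $W_I(u,\infty;s)$ has variance $us\to0$. Hence the products remain continuous, with care needed only at the boundary. I expect this boundary behaviour of $R_1W_I(u,\infty;\cdot)$ near $u=0$, where $R_1$ need not be continuous, to be the only delicate point; I would handle it by the variance bound $\E W_I(u,\infty;s)^2\le u$ together with a modulus-of-continuity argument.

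Applying $\phi$ gives $\phi(Z)=B$. Indeed, the terms rearrange because $R_1(u,v),R_2(u,v)$ are constants in $s$, so
\begin{equation*}
\phi(Z)=B_I(u,v;s)-R_1(u,v)B_I(u,\infty;s)-R_2(u,v)B_I(\infty,v;s).
\end{equation*}

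For the two test statistics, both $z\mapsto\sup_{[0,T]^2\times[0,1]}|z|$ and $z\mapsto\int_0^1\int_0^T\int_0^T z^2\,du\,dv\,ds$ are continuous with respect to uniform convergence. They are therefore continuous at continuous limit points in the Skorokhod topology. A second application of the continuous mapping theorem, to the composition with $\phi$, yields both stated convergences in distribution.
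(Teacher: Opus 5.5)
Your proposal is correct and is essentially the argument the paper intends; the paper gives no separate proof and simply says that Theorem~\ref{theorem:global estimator for multivariate} ``immediately yields'' the corollary. Your route is that argument: use $I_R(u,v;s)=sR(u,v)$ to write the statistic as $\phi(Z_n)$ with $\phi(z)(u,v;s)=z(u,v;s)-s\,z(u,v;1)$, then apply the continuous mapping theorem to $\phi$ and to the sup and integral functionals, with your check of a.s.\ continuity of the limit (including $R_1(u,v)W_I(u,\infty;s)$ near $u=0$) making explicit what the paper leaves implicit.
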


    We need to estimate $R_1$ and $R_2$. We only consider the estimation of $R_1$; the result for $R_2$ follows similarly. We simplify the notation by writing $\widehat R(u,v):= \widehat I_R(u,v;1)$ and define
    $$\widehat R_1(u,v)=\frac{\widehat R(u+k^{-1/5},v)-\widehat R(\max\{u-k^{-1/5},0\},v)}
    {\min \{ 2k^{-1/5},u+k^{-1/5}\}}\, , \quad (u,v) \in [0,T]^2.$$
    From Theorem \ref{theorem:global estimator for multivariate} with $s=1$ (and $T$ replaced with $2T$), we obtain immediately that 
    $$\sup_{(u,v) \in [0, 2T]^2}k^{1/5}|\widehat R(u,v)- R(u,v)|\stackrel{\mathbb{P}}{\to} 0,$$
    which yields
    \begin{equation} \label{conr1}\sup_{(u,v) \in [0, T]^2}|\widehat R_1(u,v)- R_1(u,v)|\stackrel{\mathbb{P}}{\to} 0.\end{equation}

Consider a modification of the Gaussian limiting process in Corollary \ref{corollary:global estimator no trend}, where $R, R_1, R_2$ are replaced
by $\widehat R, \widehat R_1,\widehat R_2$, \textit{conditional} on $\widehat R$, and denote it with $\widehat B$. In other words, we compute the estimates (the realizations) of 
$\widehat R$, and then $ \widehat R_1,\widehat R_2$,  from the data and subsequently generate the Gaussian process $\widehat B$. 
Let $Q_{n,1}$ be the quantile function of the random variable $\sup_{(u,v,s)\in [0,T]^2\times[0,1]} |\widehat B(u,v;s)|$, conditional on $\widehat R$, and  let $Q_1$ be the  quantile function of the limiting random variable with $\widehat B$ replaced by $B$; similarly let $Q_{n,2}$ be the quantile function of the random variable $\int_0^1\int_0^T\int_0^T\widehat B^2(u,v;s)dudvds$, conditional on $\widehat R$, and  let $Q_2$ be the  quantile function of the limiting random variable with $\widehat B$ replaced by $B$.
 \begin{corollary} \label{24} Let $\alpha \in (0,1)$. Under the conditions of Corollary \ref {corollary:global estimator no trend}, we have for   each continuity point  $1-\alpha$  of  $Q_j$, that 
 $$  Q_{n, j}(1-\alpha)\stackrel{\mathbb{P}}{\to} Q_j(1-\alpha), \quad \mbox{as } n\to \infty, \quad  j=1,2.$$\end{corollary}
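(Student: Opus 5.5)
The plan is to prove that the conditional law of $\widehat B$, given the data, converges in probability to the law of $B$ in a sense strong enough to carry over to the two functionals, and then to deduce quantile convergence at continuity points. Under the null hypothesis, $B_I(u,v;s)=W_I(u,v;s)-sW_I(u,v;1)$, where $W_I$ is a Wiener process indexed by the product of the $R$-measure and Lebesgue measure. So the law of $B$ depends on $R$ only through the covariance kernel $R(u_1\wedge u_2,v_1\wedge v_2)(s\wedge t-st)$ and through the coefficient functions $R_1,R_2$. I will represent $\widehat B$ and $B$ on a common probability space, independent of the data, using a single driving structure. First I build a Brownian sheet type process $\mathbb W$ on $[0,1]\times[0,1]$ indexed by "time" $s$ and a second coordinate. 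Then I set $W_I(u,v;s)=\mathbb W(s,R(u,v))$-like constructions. A cleaner choice is to write $W_I(u,v;s)=\mathcal W(A_{u,v}\times[0,s])$, where $\mathcal W$ is Gaussian white noise on $[0,\infty]^2\times[0,1]$ with control measure $\mu_R\otimes\text{Leb}$, and $A_{u,v}=[0,u]\times[0,v]$ in the $R$-measure coordinates. The version with $\widehat R$ then uses control measure $\mu_{\widehat R}\otimes\text{Leb}$. Since $\widehat R$ need not be a genuine tail copula, I would instead define $\widehat B$ directly as the centered Gaussian process with the plug-in covariance. If needed, I would take the positive part of the (approximately positive semidefinite) kernel; this is where the paper's construction matters, and I will assume it yields a valid Gaussian process.

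The key steps, in order, are as follows.

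(i) From Theorem \ref{theorem:global estimator for multivariate} with $s=1$, we have $\sup_{[0,2T]^2}|\widehat R-R|\to0$ in probability. Together with (\ref{conr1}) and its analogue for $R_2$, every subsequence has a further subsequence along which these convergences hold almost surely. It therefore suffices to prove a deterministic statement: if $R^{(n)}\to R$, $R^{(n)}_1\to R_1$ and $R^{(n)}_2\to R_2$ uniformly on $[0,T]^2$, then the Gaussian process $B^{(n)}$ built from these functions converges weakly to $B$ in $D_{[0,T]^2\times[0,1]}$.

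(ii) Convergence of finite-dimensional distributions is immediate. These are centered Gaussian vectors, and their covariances are continuous functions of $R^{(n)}, R^{(n)}_1, R^{(n)}_2$ evaluated at finitely many points, plus the values at $u=\infty$ or $v=\infty$. There $R(u,\infty)=u$ and $R(\infty,v)=v$ hold exactly, also for the plug-in versions.

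(iii) Tightness. Write $B^{(n)}=B_I^{(n)}-R_1^{(n)}B_I^{(n)}(\cdot,\infty;\cdot)-R_2^{(n)}B_I^{(n)}(\infty,\cdot;\cdot)$. The coefficient functions converge uniformly, so it suffices to show tightness of $B_I^{(n)}$ together with its two marginal pieces. For this I use a moment bound for increments over blocks. Because $R^{(n)}$ is close to a tail copula, it is Lipschitz with constant about $1$ in each argument, so the variance of an increment is at most $C(|u_1-u_2|+|v_1-v_2|+|s-t|)$. Gaussianity then upgrades this to $p$-th moment bounds, and a Bickel--Wahl type criterion, or Dudley's entropy bound for Gaussian processes, gives uniform equicontinuity in probability. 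The continuity of $R_1$ and $R_2$ only on $u>0$ resp. $v>0$ needs care near the axes. There I use that $B_I(0,v;s)=0$, and that $R_1B_I(u,\infty;s)$ is small when $u$ is small because $R_1\le1$.

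(iv) By the continuous mapping theorem, applied to the sup and to the integral of squares (both continuous on $D$ with limit in $C$), the conditional laws of the two statistics converge weakly, in probability. A standard lemma then says that weak convergence implies convergence of quantile functions at continuity points of the limiting quantile function, giving $Q_{n,j}(1-\alpha)\to Q_j(1-\alpha)$ in probability.

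I expect the main obstacle to be step (iii), specifically the uniform control of the plug-in covariance. $\widehat R$ is only approximately a tail copula, and $\widehat R_1$ is a difference quotient that is only uniformly consistent, not Lipschitz. I would therefore put all the regularity requirements on the $\widehat R$-driven Gaussian part $B_I^{(n)}$, where the increment variances are controlled by $\sup|\widehat R-R|$ plus the Lipschitz bound of $R$. The coefficients $\widehat R_1$ and $\widehat R_2$ only multiply processes that are already tight, and their uniform convergence is enough for that.
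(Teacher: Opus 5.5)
Your architecture is the standard one: sub-subsequence reduction to a deterministic statement, Gaussian fidis, tightness, continuous mapping for the sup and $L^2$ functionals, then quantile convergence at continuity points. The paper omits its proof. It only flags two ingredients: (\ref{conr1}) and the identity $\widehat R(u,\infty)=\widehat R(\infty,u)=\lfloor khu\rfloor/(kh)$. You never use the second one, and step (iii), which you rightly call the crux, fails without it.

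\textbf{Where (iii) fails.} $\widehat R$ is not Lipschitz. It is a step function jumping on a grid of mesh $1/(kh)$, and uniform closeness to a Lipschitz $R$ gives no modulus of continuity below the size of the error. Your fallback variance bound, $C(|u_1-u_2|+|v_1-v_2|+|s-t|)+C\sup|\widehat R-R|$, carries an additive term. With that term, neither a Bickel--Wichura moment criterion nor Dudley's bound via Euclidean entropy closes, because nothing controls the process at scales below $(\sup|\widehat R-R|)^{1/2}$. Gaussian processes satisfying such a bound can oscillate without bound there: add $\delta_n^{1/2}$ times i.i.d.\ normals on a grid with $e^{\delta_n^{-2}}$ cells to a Brownian motion.

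\textbf{Consequences elsewhere.} For the same reason, the deterministic statement in (i) is false as formulated. Uniform convergence of $R^{(n)},R^{(n)}_1,R^{(n)}_2$ alone neither makes $R^{(n)}(u_1\wedge u_2,v_1\wedge v_2)(s\wedge t)$ a covariance nor yields tightness. You also simply assume the plug-in process exists, and a positive-part patch would change the very process whose quantiles define $Q_{n,j}$. Finally, contrary to (ii), $\widehat R(u,\infty)=u$ does not hold exactly.

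\textbf{The missing idea: the structure of $\widehat R$.} Let $r_i^X,r_i^Y$ be the descending ranks within block $I_j$. The local estimator is then $\#\{i\in I_j: r_i^X\le khu,\ r_i^Y\le khv\}/(kh)$. Hence $\widehat R=\widehat I_R(\cdot,\cdot;1)$, a convex combination of these, equals $\mu_n([0,u]\times[0,v])$ for a discrete nonnegative measure $\mu_n$. Both marginals of $\mu_n$ put mass exactly $1/(kh)$ on each grid point $r/(kh)$. This gives three things:
\begin{itemize}
\item The plug-in kernel is exactly $(\mu_n\otimes\mathrm{Leb})$ of an intersection of orthants. So your white-noise construction with control measure $\mu_n\otimes\mathrm{Leb}$ works verbatim and produces the paper's $\widehat B$, with no patch needed.
\item Strip increments satisfy $\mathrm{Var}(\widehat W_I(u_2,v;s)-\widehat W_I(u_1,v;s))\le(\lfloor khu_2\rfloor-\lfloor khu_1\rfloor)/(kh)$, and likewise in $v$.
\item $\widehat W_I$ is constant in $(u,v)$ between grid points.
\end{itemize}

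\textbf{Closing tightness.} There are two routes.
\begin{itemize}
\item Check a moment criterion on grid-aligned blocks. There block variances are at most $\min(u_2-u_1,v_2-v_1)(s_2-s_1)$, so higher Gaussian moments give exponents summing to more than one, and sub-grid oscillation is zero.
\item Alternatively, use that lower orthants form a VC class. Then the $L^2(\mu_n\otimes\mathrm{Leb})$ entropy is bounded uniformly in $n$. Dudley's bound in the intrinsic metric, whose square is at most $|u-u'|+|v-v'|+T|s-s'|+2/(kh)$, then gives asymptotic equicontinuity.
\end{itemize}
The same identity gives $0\le\widehat R_1\le 1+k^{-4/5}h^{-1}$, which controls $\widehat R_1\widehat B_I(u,\infty;s)$ near $u=0$. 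Once you formulate the deterministic statement in (i) for distribution functions of such measures, the rest of your outline goes through.
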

 This shows that the simulation of $\widehat B$ yields asymptotically correct critical values for testing that   $R(u,v;s)$ has no trend in $s$.

The proof of  Corollary \ref{24} is somewhat lengthy, but routine. A similar but much more difficult result, with a detailed proof, is presented in \cite{EdHL},  Corollary 4.3.
Here we omit the proof and only mention that it uses, in particular, (\ref{conr1}) and the fact that
for $u\geq 0$, $\widehat R(u, \infty)=\widehat R( \infty, u)= \lfloor khu \rfloor/(kh)$, almost surely.

\setcounter{theorem}{0}
\begin{remark} For completeness we here state the asymptotic behavior of the local estimator $\widehat R(u,v;s)$, see (\ref{eq:estimator for R function: multivariate}),  at a fixed point $s\in(0,1)$ and  $(u,v)\in[0,T]^2$.   The result holds under the present conditions, but weaker conditions would suffice. Let $W$  be a centered Gaussian process  defined on $[0,\infty]^2\setminus\{\infty, \infty\}$, with covariance structure
 $$\Cov(W(u_1,v_1), W(u_2,v_2))=R(u_1\wedge u_2,v_1\wedge v_2;s),$$
 for any $(u_1,v_1),(u_2,v_2)\in [0,\infty]^2\setminus\{\infty, \infty\}$.
Then we have $$  \sqrt{kh}(\widehat R(u,v;s)-R(u,v;s))\leadsto W(u,v)-R_1(u,v;s) W(u,\infty)-R_2(u,v;s) W(\infty,v),$$
    where $\leadsto$ denotes weak convergence on $D_{[0,T]^2}$  equipped with the Skorokhod distance. 
    Note that the data are `almost' identically distributed for indices $i$ in the considered  small neighborhood of $s$. Actually, when we omit $s$ from the notation  and replace $kh$ with $k$ we obtain the result in i.i.d.\ case. 
\end{remark}

\begin{remark} Theorem \ref{theorem:global estimator for multivariate}, our main result, could be  generalized to data in dimension higher than two. We do not pursue this, but discuss it here briefly.
Note that in dimension higher than two,   the tail dependence function $\ell$ is better suited
for describing tail dependence than $R$, see, e.g.,  \cite{EKS} for its definition and for asymptotical normality of the estimator of $\ell$ in higher dimensions.  However, in dimension two, $\ell$ and $R$  provide the same information, since
$\ell(u,v;s) = u+v - R(u,v;s)$. Hence an estimator of
$I_\ell(u,v;s):=\int_0^s \ell(u,v;w)dw$ is $\widehat I_\ell(u,v;s):=s(u+v)-\widehat I_R(u,v;s)$. We immediately obtain the asymptotic normality of this estimator from Theorem \ref{theorem:global estimator for multivariate}, but we could also easily prove it directly by appropriately modifying the present proof of Theorem \ref{theorem:global estimator for multivariate}. This modified version of the proof for   $\widehat I_\ell$ could be routinely adapted to arbitrary dimension, but such a proof would be notationally more unwieldy. 
\end{remark}

\section{Proofs}

\subsection{Notations}
Recall that $m= \lfloor 1/h \rfloor$. For each $1\leq j\leq m$, denote the center of the interval $((j-1)h, jh]$ as $s_j=jh-\frac{h}{2}$. The corresponding index set of observations falling into this interval is defined as
 $I_{j}=\set{1\leq i\leq n: s_j-\frac{h}{2}< \frac{i}{n} \leq s_j+\frac{h}{2}}$. In addition, for each $s\in(0,1]$, denote 
 $j(s)=\lceil s/h\rceil \wedge m$. 
 We have that  $\widetilde R(u,v;s) = \widehat R(u,v; s_{j(s)}) $, for all $s\in(0,1].$ 

Further, denote  $H_{F,i}=\suit{\frac{1-F_i}{c_X(i/n)}}^{\leftarrow}$, where $\cdot^{\leftarrow}$ defines the left-continuous inverse. Similarly we define $H_{G, i}$ for the $Y$-coordinate. Then, write $(X_i,Y_i):=(H_{F,i}(U_i), H_{G,i}(V_i)), i=1, \ldots, n$, where $U_i$ is uniformly distributed on  $(0, 1/c_X(i/n))$, $V_i$ is uniformly distributed on  $(0, 1/c_Y(i/n))$, and $(c_X(i/n)U_i, c_Y(i/n)V_i)$ follows the copula $C_i$, $i=1, \ldots,  n$. In addition, $(U_i,V_i), i=1, \ldots, n,$ are independent random vectors.

Next, in each interval $I_j$, define,$$\underline{H}_{F,j}(x)=\min_{i\in I_j} H_{F,i}(x),\text{\ \ and \ }\overline{H}_{F,j}(x)=\max_{i\in I_j} H_{F,i}(x).$$
Further, define the order statistics of $\set{U_i}_{i\in I_j}$ as $U^{(j)}_{1,nh}\leq U^{(j)}_{2,nh} \leq \ldots \leq  U^{(j)}_{nh,nh}$.
Since both  $\underline{H}_{F,j}$ and $\overline{H}_{F,j}$  are non-increasing, we have 
$$\underline{H}_{F,j}\suit{U^{(j)}_{\lfloor khu \rfloor +1,nh}}\leq X^{(s_j)}_{nh- \lfloor khu \rfloor,nh}\leq \overline{H}_{F,j}\suit{U^{(j)}_{ \lfloor khu \rfloor +1,nh}}.$$
Similar inequalities hold for the $Y$-coordinate, with the obvious notations.

Define $\widehat{R}_L(u,v;s_j)$ and $\widehat{R}_U(u,v;s_j)$ as follows:
   \begin{eqnarray*}\nonumber\label{eq:lowerbound for R general s}
     &&\!\!\!\!\!\!\!\!\!\widehat{R}_{L}(u,v;s_j)= \frac{1}{kh}\sum_{i\in I_j}
     \mathbf{1}\!\left\{U_i< \frac{1-F_i\suit{\overline{H}_{F,j}\suit{ U^{(j)}_{\lfloor khu \rfloor+1,nh}}}}{c_X(i/n)},V_i<\frac{1-G_i\suit{\overline{H}_{G,j}\suit{V^{(j)}_{\lfloor khv \rfloor+1,nh}}}}{c_Y(i/n)}\right\}
     \\ \label{eq:upperbound for R general s} \nonumber
      &&\!\!\!\!\!\!\!\!\!\widehat{R}_{U}(u,v;s_j)= \frac{1}{kh}\sum_{i\in I_j}
     \mathbf{1}\!\left\{U_i< \frac{1-F_i\suit{\underline{H}_{F,j}\suit{U^{(j)}_{\lfloor khu \rfloor+1,nh}}}}{c_X(i/n)},V_i<\frac{1-G_i\suit{\underline{H}_{G,j}\suit{V^{(j)}_{\lfloor khv \rfloor+1,nh}}}}{c_Y(i/n)}\right\}\!.
   \end{eqnarray*} 
Then we have the following inequalities
\begin{equation} \label{eq:inequality bounding R for general s}
      \widehat{R}_L(u,v;s_j)\leq \widehat{R}(u,v;s_j)\leq \widehat{R}_U(u,v;s_j),
\end{equation} 
and hence
   \begin{equation} \label{eq:inequality bounding I_R, general s}
    \widehat{I}_{R, L}(u,v;s)
    \leq\widehat{I}_R(u,v;s)\leq \   \widehat{I}_{R, U}(u,v;s),
  \end{equation}
  where  $\widehat{I}_{R, L}$ is defined similarly as  $\widehat{I}_R$, with $\widehat{R}$ replaced by  $\widehat{R}_L$, and  $\widehat{I}_{R, U}$ is obtained from $\widehat{I}_R$ by replacing  $\widehat{R}$ by  $\widehat R_U$.

The goal is to show that both the upper and lower bound share the same limit as stated in Theorem \ref{theorem:global estimator for multivariate}. We provide the proof for the upper bound only; the proof for the lower bound follows similar lines.
\subsection{Two preliminary lemmas}

 The following lemma  shows that one can replace $\frac{1-F_i}{c_X(i/n)}\suit{\underline{H}_{F,j}\suit{\cdot}}$ by the identity function uniformly for all $i\in I_{j}$ and for all $j=1,2,\ldots,m$. 
\begin{lemma}\label{lemma: delta_F, general s}
   Denote
   $$\Delta_F:=\max_{1\leq j\leq m}\sup_{u\in[0,T]}\sup_{i\in I_j}\abs{\frac{1-F_i\suit{\underline{H}_{F,j}\suit{\frac{ku}{n}}}}{c_X\suit{\frac{i}{n}}\frac{ku}{n}}-1}.$$
   Under the conditions in Theorem \ref{theorem:global estimator for multivariate}, we have that for any $\eps>0$, 
   $$\lim_{n\to\infty}\sqrt{k}\Delta_F=0.$$
  The same  limit relation holds for $\Delta_G$ defined in a similar way for the second coordinate.
\end{lemma}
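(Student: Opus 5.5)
The plan is to reduce $\Delta_F$ to the uniform rate in Condition \ref{eq:condition heteroskedastic extremes}. The key point is that $\underline{H}_{F,j}$ is attained by one index, and for that index the ratio in $\Delta_F$ equals one exactly. We take $u\in(0,T]$; at $u=0$ both numerator and denominator vanish, and the ratio is read as its limit. Fix $j$ and $u$, and write $t=ku/n$. Since $I_j$ is finite, $\underline{H}_{F,j}(t)=H_{F,i^*}(t)$ for some $i^*=i^*(j,u)\in I_j$. Put $x:=H_{F,i^*}(t)$. Because $F_{i^*}$ is continuous, the left-continuous inverse satisfies
\begin{equation*}
\frac{1-F_{i^*}(x)}{c_X(i^*/n)}=t .
\end{equation*}

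Next I show that $x$ is close to $x^*$ uniformly in $i^*,j,u,n$. We have $1-F_{i^*}(x)\le \overline{c}_X Tk/n\to0$ uniformly. By the uniform convergence assumed in the model, there is an $x_0<x^*$ such that $(1-F_i(y))/(1-F_0(y))\ge \underline{c}_X/2$ for all $y\ge x_0$, all $i$ and all $n$. Suppose $x<x_0$. Then
\begin{equation*}
1-F_{i^*}(x)\ge 1-F_{i^*}(x_0)\ge \underline{c}_X(1-F_0(x_0))/2>0,
\end{equation*}
which contradicts $1-F_{i^*}(x)\to0$ for large $n$. Hence $x\ge x_0$. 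It follows that $1-F_0(x)\le 2\overline{c}_X Tk/(\underline{c}_X n)$, that is, $1/(1-F_0(x))\ge n/(k\tau)$ with $\tau:=2\overline{c}_XT/\underline{c}_X$. This holds uniformly, and $x$ lies in the region where the $O(\cdot)$ bound of Condition \ref{eq:condition heteroskedastic extremes} applies.

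Now I write, for any $i\in I_j$,
\begin{equation*}
\frac{1-F_i(x)}{c_X(i/n)\,t}=\frac{(1-F_i(x))/\{c_X(i/n)(1-F_0(x))\}}{(1-F_{i^*}(x))/\{c_X(i^*/n)(1-F_0(x))\}} .
\end{equation*}
By Condition \ref{eq:condition heteroskedastic extremes} and $c_X\ge\underline{c}_X$, the numerator and the denominator are both $1+O\bigl(A_X(1/(1-F_0(x)))\bigr)$, uniformly in $i,j,u,n$. Since $A_X$ is eventually non-increasing, $A_X(1/(1-F_0(x)))\le A_X(n/(k\tau))$ for large $n$. Therefore
\begin{equation*}
\Delta_F=O\bigl(A_X(n/(k\tau))\bigr).
\end{equation*}
Multiplying by $\sqrt{k}$, (\ref{eq:condition k related to marginals}) gives $\sqrt{k}\Delta_F\to0$. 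The statement for $\Delta_G$ is proved identically with $G_i$, $c_Y$ and $A_Y$. The phrase ``for any $\eps>0$'' in the statement plays no role.

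The only delicate step is the localisation: the $O$-bound must be usable at the random, $u$-dependent points $x$, uniformly over $i$, $j$, $u$ and $n$. The contradiction argument above, based on the uniform ratio limit in the model, handles this. No Lipschitz property of $c_X$ is needed here, because the exact identity for $i^*$ cancels the scedasis factors.
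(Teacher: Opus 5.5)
Your proof is correct and takes essentially the same route as the paper. The paper also compares the tail ratios of different indices $i,l\in I_j$ at the point $\underline{H}_{F,j}(ku/n)$ via Condition~\ref{eq:condition heteroskedastic extremes}. It bounds $1/(1-F_0)$ there from below by a constant times $n/k$, using the inverse relation $\min_{l\in I_j}\frac{1-F_l}{c_X(l/n)}\bigl(\underline{H}_{F,j}(t)\bigr)=t$ and the eventual monotonicity of $A_X$, and concludes with (\ref{eq:condition k related to marginals}). Your minimizing index $i^*$ plays the role of the paper's final ``take the minimum over $l$'' step. Your contradiction argument supplies the uniform localization $\underline{H}_{F,j}(ku/n)\to x^*$, which the paper only asserts.
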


The proofs of this and the next lemma are postponed to  the final subsection.

We intend  to apply Lemma \ref{lemma: delta_F, general s} with $u$ replaced by $\frac{n}{k}U^{(j)}_{\lfloor khu \rfloor+1,nh}$, for $j=1,2,\ldots,m$, which could be somewhat larger than $T$. To quantify this we will use the following result. 
\begin{lemma}\label{lemma: asymptotics for the quantiles}
   Under the conditions in Theorem \ref{theorem:global estimator for multivariate}, we have that as $n\to\infty$, \begin{align*}
     &\max_{1\leq j\leq m}\sup_{u\in[0,T]}\abs{b_{X,j}\frac{n}{k}U^{(j)}_{\lfloor khu \rfloor+1,nh}-u}=O_P\suit{\sqrt{\frac{m\log n}{k}}},\\
     &\max_{1\leq j\leq m}\sup_{v\in[0,T]}\abs{b_{Y,j}\frac{n}{k}V^{(j)}_{\lfloor khv \rfloor+1,nh}-v}=O_P\suit{\sqrt{\frac{m\log n}{k}}},
   \end{align*}
   where $b_{X,j}:=\frac{1}{nh}\sum_{i\in I_j}c_X(i/n)$ and $b_{Y,j}$ is defined analogously. 
   In addition,  for $0\leq b\leq 1/3$ we have
   \begin{align*}
     &\max_{1\leq j\leq m}\sup_{u\in\left[0,\frac{1}{k^b}\right]}\abs{b_{X,j}\frac{n}{k}U^{(j)}_{\lfloor khu \rfloor+1,nh}-u}=O_P\suit{\sqrt{\frac{m\log n}{k^{1+b}}}},\\
     &\max_{1\leq j\leq m}\sup_{v\in\left[0,\frac{1}{k^b}\right]}\abs{b_{Y,j}\frac{n}{k}V^{(j)}_{\lfloor khv \rfloor+1,nh}-v}=O_P\suit{\sqrt{\frac{m\log n}{k^{1+b}}}}.
   \end{align*}
\end{lemma}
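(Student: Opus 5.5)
We need to prove Lemma \ref{lemma: asymptotics for the quantiles}, the uniform behaviour of the local order statistics $U^{(j)}_{\lfloor khu\rfloor+1,nh}$. The $U_i$, $i\in I_j$, are independent but not identically distributed: $U_i$ is uniform on $(0,1/c_X(i/n))$, so $P(U_i\le x)=c_X(i/n)x$ for small $x$. The plan is to control, uniformly in $j$, the local tail empirical process
$$\Gamma_j(x):=\frac{1}{kh}\sum_{i\in I_j}\ll{U_i\le \tfrac{k}{n}x}, \qquad \E\,\Gamma_j(x)=b_{X,j}\,x \quad \text{for } x\le x_0\tfrac{n}{k},$$
and then invert it. The $Y$-coordinate is handled identically.

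\textbf{Step 1: uniform deviation bound for $\Gamma_j$.} I would fix a range $x\in[0,2T/\underline c_X]$ (respectively $[0,2k^{-b}/\underline c_X]$). On this range, $kh\,\Gamma_j(x)$ is a sum of $nh$ independent Bernoulli variables with total mean $kh\,b_{X,j}x\le C\,kh\,x$.
\begin{itemize}
\item Discretize $x$ on a grid of mesh $\delta_n=(k h)^{-1}$. Monotonicity of $\Gamma_j$ and of its mean controls the oscillation between grid points by $O(\delta_n)$, which is negligible.
\item At each grid point apply Bernstein's inequality. The variance is at most $C\,kh\,x_{\max}$, so for $x_{\max}=T'$ a deviation of $kh\cdot\lambda\sqrt{\log n/(kh)}$ has probability at most $\exp(-c\lambda^2\log n)$. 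For $x_{\max}=k^{-b}$ the variance is $\le C\,kh\,k^{-b}$, and the deviation becomes $\lambda\sqrt{k^{-b}\log n/(kh)}$.
\item Check that the Bernstein linear term is dominated by the variance term. This requires $\log n/(kh\,k^{-b})\to0$. With $b\le1/3$ and $kh^3/\log^3 n\to\infty$ from Condition \ref{eq:condition on k}, we get $kh\,k^{-1/3}\ge (kh^{3/2})^{2/3}\gg \log n$, which is where these restrictions enter.
\item A union bound over the $m$ blocks and the polynomially many grid points, with $\lambda$ large, gives
$$\max_j\sup_x|\Gamma_j(x)-b_{X,j}x|=O_P\big(\sqrt{m\log n/k}\big),$$
using $1/(kh)\asymp m/k$. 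The analogue with the extra factor $k^{-b}$ holds on the small range.
\end{itemize}

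\textbf{Step 2: inversion.} By definition, $\frac nk U^{(j)}_{\lfloor khu\rfloor+1,nh}=\Gamma_j^{-1}$ evaluated at $(\lfloor khu\rfloor+1)/(kh)$, where the inverse is left-continuous. This argument differs from $u$ by at most $1/(kh)=o(\sqrt{m\log n/k})$. Writing $Z_j(u):=\frac nk U^{(j)}_{\lfloor khu\rfloor+1,nh}$, one has
$$b_{X,j}Z_j(u)-u=\big(b_{X,j}Z_j(u)-\Gamma_j(Z_j(u))\big)+O(1/(kh)),$$
and the first term is bounded by the Step 1 supremum. This needs $Z_j(u)$ to lie in the controlled range.

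\textbf{Step 3: range localization, the main obstacle.} I would show that with probability tending to one, $\max_j\sup_{u\le T}Z_j(u)\le 2T/\underline c_X$, and similarly $\le 2k^{-b}/\underline c_X$ for $u\le k^{-b}$. This follows by monotonicity: it suffices that $\Gamma_j(2T/\underline c_X)> T+1/(kh)$ for all $j$, which Step 1 at that single point yields.

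In the small-range case the key issue is that the error must scale with $\sqrt{k^{-b}}$ rather than $1$. This is why Step 1 must use the localized variance bound, not a crude one. The second displayed claims of the lemma then follow. For $u$ near $0$ the bound is trivial, since both terms are $O(1/(kh))$.
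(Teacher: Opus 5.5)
Your proof is correct. It shares the paper's skeleton: a union bound over the $m$ blocks, inversion of the local empirical distribution function, and a localized variance for $u\le k^{-b}$. The difference is the key probabilistic tool. The paper bounds $\sup_{u\in[0,2T]}\abs{w_{X,j}(u/b_{X,j})}$ for each block in one step with Alexander's exponential inequality for empirical processes (Corollary 2.9 of Alexander 1984), taking $\alpha=2kT/n$ and $M=a\sqrt{k\log n/n}$. This gives $16\exp(-a^2\log n/(8T))$ per block, and the range $u\le k^{-b}$ is handled by relabeling $k$ as $k^{1-b}$. You instead exploit one-dimensional monotonicity: a grid of mesh $1/(kh)$, Bernstein's inequality at each grid point with variance bound $C\,kh\,x_{\max}$, and a union bound over at most order $n$ grid points in total.

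Your route is fully elementary. It shows exactly where $kh/\log n\to\infty$ and $k^{1-b}h/\log n\to\infty$ are needed, namely so that the linear Bernstein term is dominated. Your Steps 2 and 3 also make explicit two things the paper leaves implicit in its single inequality passing from $\Gamma_j^{-1}$ to $\Gamma_j$ on $[0,2T]$. The first is the range localization: $\max_j Z_j(T)\le 2T/\underline c_X$ with probability tending to one. The second is the off-by-one between $\Gamma_j^{-1}(ku/n)$ and the $(\lfloor khu\rfloor+1)$-th order statistic.

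The paper's route is shorter, and the same inequality is reused for the oscillation bounds of the local tail empirical processes in Propositions \ref{prop: uniform convergence of I1} and \ref{prop: uniform convergence of I4}. The first of these involves a bivariate process, where a monotone one-dimensional grid argument is less immediate.

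One small point: your closing sentence about $u$ near $0$ is unnecessary, since Step 2 already covers all $u$. It is also not quite right uniformly in $j$: $\max_j \frac nk U^{(j)}_{1,nh}$ is of order $\log m/(kh)$ rather than $1/(kh)$. That is still $o(\sqrt{m\log n/k})$, so nothing breaks.
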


\subsection{Proof of Theorem \ref{theorem:global estimator for multivariate}}
 Due to symmetry, we only provide the proof for the upper bound $\widehat{I}_{R, U}(u,v;s)$. Let $T'>T$. Lemma \ref{lemma: asymptotics for the quantiles} yields that with probability tending to 1, $\frac{n}{k} U^{(j)}_{\lfloor khu \rfloor+1,nh}\leq T'$ and $\frac{n}{k}V^{(j)}_{\lfloor khv \rfloor+1,nh}\leq T'$ for all $(u,v)\in[0,T]^2$ and all $j=1,2,\ldots,m$. On the set where these inequalities hold, we can apply Lemma \ref{lemma: delta_F, general s} with $T$ replaced by $T'$, 
to obtain that
\begin{align*}
   &(1-\Delta_F) U^{(j)}_{\lfloor khu \rfloor+1,nh} \leq \frac{1-F_i\suit{\underline{H}_{F,j}\suit{U^{(j)}_{\lfloor khu \rfloor+1,nh}}}}{c_X\suit{\frac{i}{n}}}\leq (1+\Delta_F)  U^{(j)}_{\lfloor khu \rfloor+1,nh},\\
   &(1-\Delta_G) V^{(j)}_{\lfloor khv \rfloor+1,nh} \leq \frac{1-G_i\suit{\underline{H}_{G,j}\suit{V^{(j)}_{\lfloor khv \rfloor+1,nh}}}}{c_Y\suit{\frac{i}{n}}}\leq (1+\Delta_G)  V^{(j)}_{\lfloor khv \rfloor+1,nh}. 
\end{align*}
Hence, we have that
$$\widehat{I}_{R, UL}(u,v;s)\leq \widehat{I}_{R, U}(u,v;s) \leq \widehat{I}_{R, UU}(u,v;s),$$
where
$  \widehat I_{R,UU}(u,v;s)=\int_0^s \widetilde R_{UU}(u,v;w)dw,$ 
and
    \begin{align*}
    \widetilde R_{UU}(u,v;w)&=\begin{cases}
        \widehat R_{UU}(u,v;s_j) &\text{ if } w\in ((j-1)h, jh], \text{  for } j=1,2,\ldots,m,\\
        \widehat R_{UU}(u,v;s_m) &\text{ if } w> mh,
    \end{cases}\\
    \widehat R_{UU}(u,v;s_j) &= \frac{1}{kh} \sum_{i\in I_j}\ll{U_i< \frac{k}{n}\hat u_j,V_i<\frac{k}{n}\hat v_j},
\end{align*}
with $\hat u_j=\frac{n}{k}U^{(j)}_{\lfloor khu \rfloor+1,nh}\suit{1+\Delta}$, $\hat v_j= \frac{n}{k}V^{(j)}_{\lfloor khv \rfloor+1,nh}\suit{1+\Delta}$ and $\Delta =\max(\Delta_F,\Delta_G)$.
Here the function $\widetilde R_{UU}$ extends the definition of $\widehat R_{UU}$ to the entire interval $s\in(0,1]$ in a piecewise constant way, similar to $\widetilde R(u,v;s)$.
The lower bound  $\widehat{I}_{R, UL}(u,v;s)$ is defined in a similar way except that $\Delta$ is replaced by $-\Delta$. 
Again, we will show that the upper and lower bounds, $\widehat{I}_{R, UU}(u,v;s)$ and 
$\widehat{I}_{R, UL}(u,v;s)$ share the same  limit as stated in Theorem 
\ref{theorem:global estimator for multivariate}. Due to symmetry, we only consider $\widehat{I}_{R, UU}(u,v;s)$.

Define for $(u,v)\in [0,\infty]^2\setminus\{\infty, \infty \}$, for each $1\leq j\leq m$,
$$w_{n,j}(u,v):=\sqrt{kh}\suit{\frac{1}{kh}\sum_{i\in I_j}\left[\ll{U_i< \frac{k}{n}u,V_i<\frac{k}{n}v}-C_i\suit{\frac{c_X(i/n)ku}{n},\frac{c_Y(i/n)kv}{n}}\right]};$$ note that  
$C_i\suit{z,\infty}= C_i\suit{\infty, z}= z, z \in [0,1]$.
We start by considering $s\in\set{jh: 1\leq j\leq m}$. For such $s$ we have
\begin{eqnarray*}
&&\sqrt{k}\suit{\widehat{I}_{R,UU}(u,v;s)- I_R(u,v;s)}\\
   &&=\sqrt{k}\left(
   \frac{1}{k}\sum_{j=1}^{s/h} \sum_{i\in I_j}\ll{ U_i< \frac{k}{n}\hat u_j, V_i<\frac{k}{n}\hat v_j}- \int_0^{s} R(u,v;w)dw\right)\\
  && =\,\sqrt{h}\sum_{j=1}^{s/h}w_{n,j}(\hat u_j,\hat v_j)
   \\
   &&+\sqrt{k}\left(\frac{1}{n}\sum_{j=1}^{s/h}\sum_{i\in I_j}\frac{n}{k}C_i\suit{\frac{c_X(i/n)k}{n}\hat u_j,\frac{c_Y(i/n)k}{n}\hat v_j}\right.
   \\&&\qquad\left.
   -\frac{1}{n}\sum_{j=1}^{s/h}\sum_{i\in I_j}R\suit{c_X\suit{\frac{i}{n}}\hat u_j,c_Y\suit{\frac{i}{n}}\hat v_j;s_j} \right)\\
   &&+\sqrt{k}\left(\frac{1}{n}\sum_{j=1}^{s/h}\sum_{i\in I_j}R\suit{c_X\suit{\frac{i}{n}}\hat u_j,c_Y\suit{\frac{i}{n}}\hat v_j;s_j}
-h\sum_{j=1}^{s/h}
R\suit{b_{X,j}\hat u_j,b_{Y,j}\hat v_j;s_j}\right)\\
&&+\sqrt{k}\left(h\sum_{j=1}^{s/h}
R\suit{b_{X,j}\hat u_j,b_{Y,j}\hat v_j;s_j}
   -h\sum_{j=1}^{s/h}R(u,v;s_j)
   \right)\\
&&+\sqrt{k}\suit{h\sum_{j=1}^{s/h}R(u,v;s_j)
-\int_0^{s} R(u,v;w)dw}\\
  && =:T_1+T_2+T_3+T_4+T_5.
\end{eqnarray*}

We handle the five parts separately in five propositions; their proofs are given in the next subsection.
\begin{proposition} \label{prop: uniform convergence of I1}
      Under the conditions in Theorem \ref{theorem:global estimator for multivariate}, we have that uniformly for all $(u,v,s)\in [0,T]^2\times \set{jh: 1\leq j\leq m}$,    
$$T_1=\sqrt{h}\sum_{j=1}^{s/h}w_{n,j}\suit{\frac{u}{b_{X,j}},\frac{v}{b_{Y,j}}}
+o_P(1)=:\widetilde T_1+o_P(1).
    $$ 
\end{proposition}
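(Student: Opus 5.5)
The plan is to replace the random arguments $(\hat u_j,\hat v_j)$ in $w_{n,j}$ by the deterministic points $(u/b_{X,j},v/b_{Y,j})$, using an asymptotic equicontinuity (modulus of continuity) bound for the processes $w_{n,j}$ that holds uniformly over $j=1,\ldots,m$.

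First I control the distance between the random and deterministic arguments. By Lemma \ref{lemma: asymptotics for the quantiles}, $\max_j\sup_{u\le T}|\frac{n}{k}U^{(j)}_{\lfloor khu\rfloor+1,nh}-u/b_{X,j}|=O_P(\sqrt{m\log n/k})$, since $b_{X,j}\ge \underline c_X>0$. Lemma \ref{lemma: delta_F, general s} gives $\Delta=o(k^{-1/2})$. Hence
\[
\max_j\sup_{u\le T}|\hat u_j-u/b_{X,j}|=O_P(\delta_n),\qquad \delta_n:=\sqrt{\log n/(kh)},
\]
and similarly for $\hat v_j$. By Condition \ref{eq:condition on k}, $\delta_n\to0$.

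Next I write
\[
T_1-\widetilde T_1=\sqrt h\sum_{j\le s/h}\bigl[w_{n,j}(\hat u_j,\hat v_j)-w_{n,j}(u/b_{X,j},v/b_{Y,j})\bigr].
\]
This cannot be bounded by $\sqrt h\cdot m\cdot\max_j|\cdot|$ with only $o_P(1)$ per block, because that costs a factor $m\sqrt h=h^{-1/2}$. I therefore need $\max_j\sup|w_{n,j}(\text{diff})|=o_P(\sqrt h)$.

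To get this, I bound the increment of each $w_{n,j}$ over rectangles $|u-u'|,|v-v'|\le\delta$, with $(u,v)\in[0,T']^2$, by the oscillation of the centered local tail empirical process. For a single block, which consists of $nh$ independent but non-identical observations, I apply a Bernstein/Talagrand-type inequality on a grid of mesh $\delta_n$. Monotonicity of the indicators in $(u,v)$ handles points between grid points. The variance of an increment over a $\delta$-strip is of order $\delta$, since the rectangle masses are $\frac{k}{n}\cdot O(\delta)$ per observation, uniformly in $i$ by (\ref{eq:definition of R}) and the bounds on $c_X,c_Y$. This yields
\[
\P\Bigl(\sup_{\text{grid}}|w_{n,j}\text{-increment}|>x\Bigr)\le C\delta_n^{-2}\exp\Bigl(-\frac{c\,x^2}{\delta_n+x/\sqrt{kh}}\Bigr).
\]
A union bound over the $m$ blocks then gives
\[
\max_j\sup|\text{increment}|=O_P\Bigl(\sqrt{\delta_n\log n}+\log n/\sqrt{kh}\Bigr).
\]

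The final step is to check that this bound is $o(\sqrt h)$. We have $\sqrt{\delta_n\log n}/\sqrt h=(\log n)^{3/4}(kh)^{-1/4}h^{-1/2}=\bigl(\log^3n/(kh^3)\bigr)^{1/4}\to0$ by Condition \ref{eq:condition on k}. Also $\log n/\sqrt{kh^2}\to0$ follows from the same condition. Because the bound is uniform in $(u,v)$ and in $j$, it is uniform in $s\in\{jh\}$, which proves the proposition.

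The main obstacle is the sharp per-block modulus of continuity with exponential tails. Plain $o_P(1)$ equicontinuity does not suffice: the bound must be $o(\sqrt h)$ while also surviving a union over $m$ blocks. The rate condition $kh^3/\log^3n\to\infty$ appears designed exactly for this step.

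If the crude summation turns out to be too lossy, a refinement is available, because the increments are independent across blocks. I would first split each increment into the part at the random arguments and the part at the deterministic ones. The random arguments in block $j$ depend only on that block's data, so I would bound each block's oscillation by its supremum over a deterministic $\delta_n$-neighbourhood. These suprema are independent across blocks but not centered, so the sum is handled via expectations: $\sqrt h\sum_j\E\sup\approx h^{-1/2}\sqrt{\delta_n\log(1/\delta_n)}$. This is the same order as the bound above, so I would fall back on the union-bound argument.
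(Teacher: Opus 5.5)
Your proof is essentially the paper's. Both reduce to showing that the per-block oscillation of $w_{n,j}$ over neighbourhoods of radius $\sqrt{\log n/(kh)}$ (supplied by Lemmas \ref{lemma: delta_F, general s} and \ref{lemma: asymptotics for the quantiles}) is $o_P(\sqrt h)$ uniformly in $j$, via a union bound over the $m$ blocks and an exponential inequality for the local oscillation of each block's tail empirical process. The paper uses Corollary 2.9 of \citet{a84} where you use Bernstein on a grid, and this is exactly where $kh^3/\log^3 n\to\infty$ enters.

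One detail needs repair. On a grid of mesh $\delta_n$ the centering part of $w_{n,j}$ moves by about $\sqrt{kh}\,\delta_n=\sqrt{\log n}$ across a cell, so the monotonicity (bracketing) step needs a mesh $o(k^{-1/2})$, e.g.\ $1/k$. This only turns your prefactor $\delta_n^{-2}$ into a polynomial in $n$, so the rate $O_P(\sqrt{\delta_n\log n}+\log n/\sqrt{kh})$ and your conclusion are unaffected.
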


\begin{proposition} \label{prop: uniform convergence of I2}
   Under the conditions in Theorem \ref{theorem:global estimator for multivariate}, we have that uniformly for all $(u,v,s)\in [0,T]^2\times \set{jh: 1\leq j\leq m}$, as $n\to\infty$,
     \:  $T_2=o_P(1).$
\end{proposition}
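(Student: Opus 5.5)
We need to show $T_2=o_P(1)$ uniformly. The plan is to bound each summand directly via Condition \ref{eq:condition k related to R}. Write
$$|T_2|\le \sqrt{k}\,\frac{1}{n}\sum_{j=1}^{s/h}\sum_{i\in I_j}\abs{\frac{n}{k}C_i\suit{\frac{k}{n}c_X(i/n)\hat u_j,\frac{k}{n}c_Y(i/n)\hat v_j}-R\suit{c_X(i/n)\hat u_j,c_Y(i/n)\hat v_j;s_j}}.$$
Split each absolute value into two pieces:
\begin{itemize}
\item[(a)] the difference between $\frac{n}{k}C_i(\cdot)$ and $R(\cdot;i/n)$;
\item[(b)] the difference $R(\cdot;i/n)-R(\cdot;s_j)$, evaluated at the same random arguments.
\end{itemize}
The total weight $\frac1n\sum_j\sum_{i\in I_j}$ is at most $1$. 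Hence it suffices to show that $\sqrt{k}$ times the maximum over $i$ of piece (a), plus $\sqrt k$ times the average of piece (b), tends to zero uniformly.

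First I would localize the arguments. By Lemma \ref{lemma: asymptotics for the quantiles}, with probability tending to one, $\hat u_j,\hat v_j\le T'/\min(\underline{c}_X,\underline{c}_Y)\,(1+\Delta)$ for all $j$ and all $(u,v)\in[0,T]^2$. Lemma \ref{lemma: delta_F, general s} gives $\Delta\to0$, so $c_X(i/n)\hat u_j$ and $c_Y(i/n)\hat v_j$ lie in $[0,\tau]$ with $\tau:=2T'\overline{c}_X\overline{c}_Y/(\underline{c}_X\underline{c}_Y)$, say. On this event, piece (a) is bounded by
$$\max_i\sup_{(x,y)\in[0,\tau]^2}\abs{\tfrac nk C_i(\tfrac kn x,\tfrac kn y)-R(x,y;i/n)},$$
and $\sqrt{k}$ times this tends to zero by (\ref{eq:condition k related to R}). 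This part is deterministic once the arguments are localized.

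For piece (b), I would use the smoothness in $s$ from Condition \ref{eq:condition partial derivatives of R}. Since $R''$ is bounded on $[0,\tau]^2\times[0,1]$, the first derivative $R'$ is bounded there as well: $R'(x,y;s)=R'(x,y;0)+\int_0^sR''$. One has to check that $R'(x,y;0)$ is bounded. The cleanest route is a mean-value/Taylor argument on $R(\cdot;s)$ itself, using that $R$ is bounded by $\tau$ (tail copulas satisfy $R(x,y;s)\le x\wedge y$), which gives $\sup|R'|<\infty$. A plain first-order bound then gives $|R(x,y;i/n)-R(x,y;s_j)|\le Ch$ for $i\in I_j$, so piece (b) contributes $O(\sqrt{k}h)$. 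This is not $o(1)$ under Condition \ref{eq:condition on k}, since only $kh^4\to0$ is assumed.

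This is the main obstacle, and I would handle it with a second-order expansion:
$$R(x,y;i/n)-R(x,y;s_j)=R'(x,y;s_j)(i/n-s_j)+O(h^2),$$
with the $O(h^2)$ uniform by boundedness of $R''$. The remainder contributes $\sqrt{k}h^2\to0$ since $kh^4\to0$. For the linear term, the arguments $x=c_X(i/n)\hat u_j$ still depend on $i$. So I would first replace $c_X(i/n)$ by $c_X(s_j)$, and similarly for $c_Y$, using the Lipschitz property of Condition \ref{eq:condition smooth of c and R}. This is combined with Lipschitz continuity of $R'$ in $(x,y)$, at least in a suitable averaged sense; failing that, I would use $R'(x,y;\cdot)$ bounded together with $|c_X(i/n)-c_X(s_j)|\le Lh$ in a mixed-derivative bound. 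After this replacement, the linear term becomes $R'(c_X(s_j)\hat u_j,c_Y(s_j)\hat v_j;s_j)\sum_{i\in I_j}(i/n-s_j)$. Here $\sum_{i\in I_j}(i/n-s_j)=O(1)$, being nearly symmetric about the midpoint (exactly $nh/(2n)=h/2$ in total). This yields a term of order $\sqrt{k}\cdot m\cdot\frac1n=O(\sqrt{k}/(nh))\to0$.

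The error from replacing $c_X(i/n)$ gives $O(h)\cdot O(h)$ per observation, hence $\sqrt{k}h^2\to0$. If regularity of $R'$ in $(x,y)$ is insufficient, the fallback is to keep $R$ exact in its first two arguments and expand only in the third. I would define $g_i(s):=R(c_X(i/n)\hat u_j,c_Y(i/n)\hat v_j;s)$, which differs across $i$ only through Lipschitz arguments. The symmetric cancellation then holds up to $O(h^2)$ per observation, provided $R'$ is continuous in $(x,y)$, which follows from the boundedness of $R''$ and the continuity of $R$. Combining (a) and (b) gives $T_2=o_P(1)$ uniformly in $(u,v,s)$.
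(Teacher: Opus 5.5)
Your route is the paper's: localize the arguments, swap $\frac nk C_i$ for $R(\cdot;i/n)$ via (\ref{eq:condition k related to R}), expand $R(\cdot;i/n)$ to second order around $s_j$ using the bound on $R''$, cancel the linear term through $\sum_{i\in I_j}(i/n-s_j)=h/2$, and bound the remainder by $O(\sqrt k\,h^2)$. The localization, the boundedness of $R'$ and the remainder estimate are fine.

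The gap is the linear term
\[
\sum_{i\in I_j}\Bigl(\frac in-s_j\Bigr)R'\Bigl(c_X\bigl(\tfrac in\bigr)\hat u_j,\,c_Y\bigl(\tfrac in\bigr)\hat v_j;\,s_j\Bigr).
\]
You correctly notice that the arguments of $R'$ vary with $i$. The paper's proof passes over this: it bounds the term by $Ch/2$, treating $R'$ as a common factor over $i\in I_j$. Neither of your repairs works under Conditions \ref{eq:condition heteroskedastic extremes}--\ref{eq:condition partial derivatives of R}.

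\begin{itemize}
\item \emph{Primary repair.} Replacing $c_X(i/n),c_Y(i/n)$ by $c_X(s_j),c_Y(s_j)$ at cost $O(h)\cdot O(h)$ per summand requires $R'(\cdot,\cdot;s)$ to be Lipschitz in $(u,v)$, uniformly in $s$. That is control of the mixed derivatives $\partial_s R_1,\partial_s R_2$. Condition \ref{eq:condition partial derivatives of R} only bounds the pure second derivative in $s$ and makes $R_1,R_2$ Lipschitz in $(u,v)$, and this does not give such control.
\item \emph{Fallback.} Mere continuity of $R'$ in $(x,y)$ gives only an $o(1)$ bound on the difference between $R'(c_X(i/n)\hat u_j,c_Y(i/n)\hat v_j;s_j)$ and $R'(c_X(s_j)\hat u_j,c_Y(s_j)\hat v_j;s_j)$. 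So the linear term is $o(nh^2)$ per block and contributes $o(\sqrt k\,h)$ to $T_2$. But $\sqrt k\,h\to\infty$, since $kh^3/\log^3 n\to\infty$ by Condition \ref{eq:condition on k}. ``Symmetric cancellation up to $O(h^2)$ per observation'' needs an $O(h)$ modulus of continuity, not just continuity.
\end{itemize}

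To close the step you need an extra hypothesis, for example that $R'(\cdot,\cdot;s)$ is Lipschitz on $[0,\tau]^2$ uniformly in $s\in[0,1]$. With that, your argument gives $O(nh^3)$ per block and $T_2=O_P(\sqrt k\,h^2)=o_P(1)$.

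It also helps to see where the difficulty actually lies. Since $R'(\cdot,\cdot;s)$ is homogeneous of order one, suppose $c_Y/c_X\equiv\kappa$ is constant. Then
\[
R'\bigl(c_X(i/n)\hat u_j,\,c_Y(i/n)\hat v_j;\,s_j\bigr)=c_X(i/n)\,R'(\hat u_j,\kappa\hat v_j;s_j),
\]
and the Lipschitz property of $c_X$ alone yields the $O(nh^3)$ bound per block. It is the within-block variation of $c_Y/c_X$ that demands regularity of $R'$ in its first two arguments, and neither your proposal nor the paper's written proof supplies it.
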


\begin{proposition} \label{prop: uniform convergence of I3}
   Under the conditions in Theorem \ref{theorem:global estimator for multivariate}, we have that uniformly for all $(u,v,s)\in [0,T]^2\times \set{jh: 1\leq j\leq m}$, as $n\to\infty$,
    \: $  T_3=o_P(1).$
\end{proposition}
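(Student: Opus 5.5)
The plan is a second-order Taylor argument inside each block. The first-order terms cancel exactly because $b_{X,j}$ and $b_{Y,j}$ are the block averages of $c_X(i/n)$ and $c_Y(i/n)$. The second-order terms are $O(h^2)$ in total, and $\sqrt{k}h^2=\sqrt{kh^4}\to0$ by Condition \ref{eq:condition on k}.

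\textbf{Setting up.} Since $|I_j|=nh$, we have $\frac1n\sum_{i\in I_j}=h\cdot\frac{1}{nh}\sum_{i\in I_j}$. Hence
$$T_3=\sqrt{k}\,h\sum_{j=1}^{s/h}\frac{1}{nh}\sum_{i\in I_j}\Bigl[R(x_i,y_i;s_j)-R(\bar x_j,\bar y_j;s_j)\Bigr],$$
with $x_i=c_X(i/n)\hat u_j$, $y_i=c_Y(i/n)\hat v_j$, $\bar x_j=b_{X,j}\hat u_j$ and $\bar y_j=b_{Y,j}\hat v_j$. In particular $\bar x_j$ and $\bar y_j$ are the block averages of $x_i$ and $y_i$. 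By Lemma \ref{lemma: asymptotics for the quantiles} and Lemma \ref{lemma: delta_F, general s}, with probability tending to one all $\hat u_j,\hat v_j$ lie in $[0,2T']$, uniformly in $j$ and $(u,v)\in[0,T]^2$. We work on this event.

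Because $c_X$ and $c_Y$ are Lipschitz (Condition \ref{eq:condition smooth of c and R}) and $i/n$ varies over an interval of length $h$ within $I_j$, we have $|c_X(i/n)-b_{X,j}|\le Lh$. Therefore $|x_i-\bar x_j|\le Lh\hat u_j$, and similarly $|y_i-\bar y_j|\le Lh\hat v_j$.

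\textbf{Taylor step.} Write $R(x_i,y_i;s_j)-R(\bar x_j,\bar y_j;s_j)$ as a two-step increment: first in the first argument, then in the second. By the mean value theorem this equals
$$R_1(\xi_i,y_i;s_j)(x_i-\bar x_j)+R_2(\bar x_j,\eta_i;s_j)(y_i-\bar y_j).$$
Averaging over $i\in I_j$, the deviations $x_i-\bar x_j$ and $y_i-\bar y_j$ average to zero. We may therefore subtract $R_1(\bar x_j,\bar y_j;s_j)$ and $R_2(\bar x_j,\bar y_j;s_j)$ from the two gradients. It then remains to bound $|R_1(\xi_i,y_i;s_j)-R_1(\bar x_j,\bar y_j;s_j)|$, and the analogous $R_2$ difference, by $O(h)$ uniformly.

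\textbf{Main obstacle: near the origin.} Condition \ref{eq:condition partial derivatives of R} only gives Lipschitz continuity of $R_1$ and $R_2$ outside $[0,\frac12)^2$, while $\hat u_j,\hat v_j$ may be small. I would handle this with homogeneity. Each $R(\cdot,\cdot;s)$ is a tail copula, so it is homogeneous of degree one, and hence $R_1$ and $R_2$ are homogeneous of degree zero.

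Let $r_j=\max(\bar x_j,\bar y_j)$. All the points involved lie within a factor depending only on $\underline c_X,\overline c_X,\underline c_Y,\overline c_Y$ of the point $(\bar x_j,\bar y_j)$. Rescale every point by a factor $\lambda_j\asymp 1/r_j$, chosen so that all rescaled points have maximal coordinate at least $\frac12$. The rescaled points then lie in the Lipschitz region, and the rescaled differences are $O(h)$. Using the Lipschitz constant $K$ separately in $u$ and in $v$, with the triangle inequality, the gradient differences are $O(h)$ uniformly in $i,j,u,v,s$.

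The trivial cases need separate mention. If $\hat u_j=0$, then all $x_i=\bar x_j=0$ and the first increment vanishes; the same holds for $\hat v_j=0$. This also avoids the points where $R_1$ or $R_2$ fails to exist.

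\textbf{Conclusion.} Each block contributes at most $O(h)\cdot O(h)\cdot 2T'=O(h^2)$ to the inner average. Summing over at most $m\le 1/h$ blocks with weight $h$ gives
$$\sup_{(u,v,s)}|T_3|=O(\sqrt{k}\,h^2)=O\bigl(\sqrt{kh^4}\bigr)\to0$$
on an event of probability tending to one, by Condition \ref{eq:condition on k}. Hence $T_3=o_P(1)$ uniformly over $[0,T]^2\times\{jh:1\le j\le m\}$.
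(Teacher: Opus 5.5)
Your proposal is correct and follows the paper's proof essentially step for step. Both use a mean value expansion, the exact cancellation of the first-order term because $\sum_{i\in I_j}(c_X(i/n)-b_{X,j})=0$, and then the degree-zero homogeneity of $R_1,R_2$ together with their Lipschitz continuity away from $[0,\frac12)^2$ to bound the gradient difference by $O(h)$, which yields $O_P(\sqrt{k}h^2)=o_P(1)$; the only cosmetic differences are that you use a coordinatewise mean value step and rescale by $\max(\bar x_j,\bar y_j)$, whereas the paper uses a joint intermediate point and splits into cases according to whether $b_{X,j}\hat u_j\le b_{Y,j}\hat v_j$.
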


\begin{proposition} \label{prop: uniform convergence of I4}
   Under the conditions in Theorem \ref{theorem:global estimator for multivariate}, we have that uniformly for all $(u,v,s)\in [0,T]^2\times \set{jh: 1\leq j\leq m}$, as $n\to\infty$,
   \begin{align*}
      T_4&=-\sqrt{h}\left\{\sum_{j=1}^{s/h}R_1(u,v;s_j)w_{n,j}\left(\frac{u}{b_{X,j}},\infty\right)+R_2(u,v;s_j)w_{n,j}\left(\infty,\frac{v}{b_{Y,j}}\right)\right\}
+o_P(1)\\ &
=:\widetilde T_4 +o_P(1).
   \end{align*}
\end{proposition}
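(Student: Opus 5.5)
The plan is a first-order Taylor expansion of $R(\cdot,\cdot;s_j)$ around $(u,v)$, applied to each block $j$, with the linear term expressed through the marginal empirical quantiles $\hat u_j,\hat v_j$. Write $\hat u_j^*:=b_{X,j}\hat u_j$ and $\hat v_j^*:=b_{Y,j}\hat v_j$. By the mean value theorem,
\begin{equation*}
R(\hat u_j^*,\hat v_j^*;s_j)-R(u,v;s_j)=R_1(\xi_j,\eta_j;s_j)(\hat u_j^*-u)+R_2(\xi_j',\eta_j';s_j)(\hat v_j^*-v),
\end{equation*}
with intermediate points. It therefore suffices to show two things uniformly in $(u,v,s)$: first, $\sqrt{k}h\sum_{j\le s/h}R_1(u,v;s_j)(\hat u_j^*-u)=-\sqrt h\sum_j R_1(u,v;s_j)w_{n,j}(u/b_{X,j},\infty)+o_P(1)$, together with the analogue for $v$; second, replacing $R_1(\xi_j,\eta_j;s_j)$ by $R_1(u,v;s_j)$ costs $o_P(1)$.

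The first step is a Bahadur–Kiefer type representation of the block quantile. Note that $w_{n,j}(x,\infty)=\sqrt{kh}\,\frac{1}{kh}\sum_{i\in I_j}[\mathbf 1\{U_i<kx/n\}-c_X(i/n)kx/n]$. Evaluating at $x=u/b_{X,j}$ centers this at $khu$, since $\sum_{i\in I_j}c_X(i/n)=nhb_{X,j}$. Inverting the tail empirical process of block $j$ at level $\lfloor khu\rfloor+1$ gives
\begin{equation*}
\sqrt{kh}\,\bigl(b_{X,j}\tfrac nk U^{(j)}_{\lfloor khu\rfloor+1,nh}-u\bigr)=-w_{n,j}(u/b_{X,j},\infty)+r_{n,j}(u).
\end{equation*}
The remainder $r_{n,j}(u)$ comes from the oscillation of $w_{n,j}(\cdot,\infty)$ over an interval of length $O_P(\sqrt{m\log n/k})$, by Lemma \ref{lemma: asymptotics for the quantiles}, plus an $O(1/\sqrt{kh})$ discretization error. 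The factor $(1+\Delta)$ in $\hat u_j$ contributes $\sqrt k\,O(\Delta)=o(1)$ by Lemma \ref{lemma: delta_F, general s}. Summing, the error is $\sqrt h\sum_j|r_{n,j}|\le \sqrt{h}\,m\max_j|r_{n,j}|=h^{-1/2}\max_j|r_{n,j}|$.

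The oscillation modulus of a uniform tail empirical process over a window of length $\delta=\sqrt{m\log n/k}$ behaves like $\sqrt{\delta\log n}$. A union bound over $m$ blocks via Bernstein and a chaining over a grid then gives
\begin{equation*}
\max_j|r_{n,j}|=O_P\bigl((m\log n/k)^{1/4}\sqrt{\log n}\bigr).
\end{equation*}
Hence $h^{-1/2}\max_j|r_{n,j}|=O_P\bigl((\log^3 n/(kh^3))^{1/4}\bigr)\to0$ by Condition \ref{eq:condition on k}. This is exactly where $kh^3/\log^3n\to\infty$ enters, and I expect it to be the main obstacle. The difficulty is that the block-wise errors are summed with weight $\sqrt h$ over $m$ blocks without any cancellation, so a sharp uniform-in-$j$ oscillation bound is needed. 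To get it I would discretize $u$ on a grid of mesh $(kh)^{-1}$ and apply Bernstein's inequality to the block counts. I would also handle small $u$ separately, using the refined second part of Lemma \ref{lemma: asymptotics for the quantiles} with $b=1/3$, since there the variance is smaller.

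The second step is the derivative replacement. For $u\ge 1/2$ or $v\ge1/2$ (region $[0,\infty)^2\setminus[0,1/2)^2$), the Lipschitz property of $R_1$ in Condition \ref{eq:condition partial derivatives of R} gives
\begin{equation*}
|R_1(\xi_j,\eta_j;s_j)-R_1(u,v;s_j)|\le K\max_j\bigl(|\hat u_j^*-u|+|\hat v_j^*-v|\bigr)=O_P(\sqrt{m\log n/k}).
\end{equation*}
Multiplying by $\sqrt k\,h\sum_j|\hat u_j^*-u|=O_P(\sqrt{m\log n})$ gives $O_P(m\log n/\sqrt k)=O_P(\log n/(h\sqrt k))=o_P(1)$, since $kh^2/\log^2n\to\infty$. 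On $[0,1/2)^2$ Lipschitz continuity is not assumed. There I would use instead that $0\le R_1\le1$ and that $R_1$ is homogeneous of degree zero in $(u,v)$ (as $R$ is homogeneous of degree one). This reduces the problem to the region away from the origin after rescaling, except on a $k^{-1/3}$-neighbourhood of $u=0$. On that neighbourhood both terms are bounded directly by $\sqrt k\,h\sum_j|\hat u_j^*-u|$, which by the $b=1/3$ bound of Lemma \ref{lemma: asymptotics for the quantiles} is $O_P(\sqrt{m\log n}\,k^{-1/6})\to0$ since $kh^3\to\infty$. The $v$-terms are handled identically, which yields the representation $\widetilde T_4$.
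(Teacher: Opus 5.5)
\textbf{Where your route matches the paper, and where it breaks.} Your overall route is the paper's: the mean value theorem, replacing $R_1$ at the intermediate point by $R_1(u,v;s_j)$, and a block-wise Bahadur--Kiefer representation. In that representation the remainder is the oscillation of $w_{n,j}(\cdot,\infty)$ over windows of length $a\sqrt{m\log n/k}$, plus an $O(1/\sqrt{kh})$ discretization term. The paper bounds that oscillation with Alexander's inequality rather than Bernstein plus a grid, and arrives at the same requirement $kh^3/\log^3 n\to\infty$. That part of your proposal is fine.

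\textbf{The gap: derivative replacement on $[0,1/2)^2$.} Rescaling by a factor $\lambda$ and using homogeneity gives $|R_1(\xi_j,\eta_j;s_j)-R_1(u,v;s_j)|\le K\lambda(|\xi_j-u|+|\eta_j-v|)$. With the global rate of Lemma \ref{lemma: asymptotics for the quantiles}, the contribution is then $O_P(\lambda\, m\log n/\sqrt{k})$. To reach every point with $u\ge k^{-1/3}$ you need $\lambda$ of order $k^{1/3}$. This gives $O_P(m\log n\,k^{-1/6})=O_P\bigl((\log^6 n/(kh^6))^{1/6}\bigr)$, which need not vanish under Condition \ref{eq:condition on k}.

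Moving the threshold does not help. Suppose you use the crude bound on $\{u\le \lambda_n^{-1}\}$ and a single rescaling by $\lambda_n$ elsewhere. You then need both $\lambda_n/(m\log n)\to\infty$ and $\lambda_n m\log n/\sqrt{k}\to 0$. Dividing the second by the first forces $kh^4/\log^4 n\to\infty$, which contradicts $kh^4\to 0$. So, as written, a single rescaling plus the $k^{-1/3}$-neighbourhood cannot close the argument.

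\textbf{The missing idea: an intermediate scale.} At that scale the larger Lipschitz constant is paid for by a sharper local quantile rate. The paper uses three regions:
\begin{itemize}
\item $u\le k^{-1/3}$: the crude bound with Lemma \ref{lemma: asymptotics for the quantiles} at $b=1/3$, as you do.
\item $\max(u,v)\ge k^{-1/6}$: rescale by $k^{1/6}$ and use the global rate, which gives $O_P(m\log n\,k^{-1/3})$.
\item The strip $[k^{-1/3},k^{-1/6}]\times[0,k^{-1/6}]$: rescale by $k^{1/3}$, but now apply Lemma \ref{lemma: asymptotics for the quantiles} at $b=1/6$. Both coordinate errors are then $O_P(\sqrt{m\log n/k^{7/6}})$, and the bound is again $O_P(m\log n\,k^{-1/3})=O_P\bigl((\log^3 n/(kh^3))^{1/3}\bigr)$.
\end{itemize}

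\textbf{A smaller point of the same kind.} On $\{\max(u,v)\ge 1/2\}$, the intermediate point $(\xi_j,\eta_j)$ can fall back into $[0,1/2)^2$ when $\max(u,v)$ is close to $1/2$. Lipschitz continuity of $R_1$ is not assumed there. The paper avoids this by rescaling so that the rescaled point has a coordinate at least $1$; the $o_P(1)$ perturbation then keeps it inside the Lipschitz region.
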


\begin{proposition} \label{prop: uniform convergence of I5}
   Under the conditions in Theorem \ref{theorem:global estimator for multivariate}, we have that uniformly for all $(u,v,s)\in [0,T]^2\times \set{jh: 1\leq j\leq m}$, as $n\to\infty$,
    \: $  T_5=o(1).$
\end{proposition}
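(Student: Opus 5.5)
The statement is Proposition \ref{prop: uniform convergence of I5}. It is purely deterministic: $T_5=\sqrt{k}\bigl(h\sum_{j=1}^{s/h}R(u,v;s_j)-\int_0^s R(u,v;w)\,dw\bigr)$ for $s=Jh$ with $1\le J\le m$. The plan is to write the integral as a sum over the cells $((j-1)h,jh]$, $j=1,\dots,J$. Then we bound the midpoint-rule error on each cell using the boundedness of $R''$ from Condition \ref{eq:condition partial derivatives of R}. Finally, Condition \ref{eq:condition on k} ($kh^4\to0$) absorbs the factor $\sqrt{k}$.

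\textbf{Expansion on one cell.} Fix $(u,v)\in[0,T]^2$ and $j$. For $w\in((j-1)h,jh]$, Taylor's theorem in $s$ with Lagrange remainder gives
\begin{equation*}
R(u,v;w)=R(u,v;s_j)+R'(u,v;s_j)(w-s_j)+\tfrac12 R''(u,v;\xi_w)(w-s_j)^2 .
\end{equation*}
Here $\xi_w$ lies between $w$ and $s_j$. This is justified because, by Condition \ref{eq:condition partial derivatives of R}, $R''$ exists and is bounded on $[0,T]^2\times[0,1]$, so $R'$ is Lipschitz in $s$.

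\textbf{Integrating.} Since $s_j$ is the midpoint of the cell, the linear term integrates to zero. Hence
\begin{equation*}
\Bigl|hR(u,v;s_j)-\int_{(j-1)h}^{jh}R(u,v;w)\,dw\Bigr|\le \tfrac12 M\int_{-h/2}^{h/2}x^2\,dx=\tfrac{M}{24}h^3,
\end{equation*}
where $M:=\sup_{[0,T]^2\times[0,1]}|R''|<\infty$. If one prefers to avoid assuming $R''$ is continuous, the same bound follows by writing $R'(u,v;w)-R'(u,v;s_j)=\int_{s_j}^w R''$, since $R'$ is absolutely continuous in $s$ with bounded derivative.

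\textbf{Summing over cells.} There are $J\le m\le 1/h$ cells, so uniformly in $(u,v,s)\in[0,T]^2\times\{jh:1\le j\le m\}$,
\begin{equation*}
|T_5|\le \sqrt{k}\,\frac{1}{h}\,\frac{M}{24}h^3=\frac{M}{24}\sqrt{kh^4}\to0 .
\end{equation*}
The last step uses Condition \ref{eq:condition on k}.

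The only point needing care is regularity. Condition \ref{eq:condition partial derivatives of R} gives boundedness of $R''$ rather than continuity, so the integral form of the remainder is safer than the Lagrange form; everything else is routine. Notably, without the midpoint cancellation the per-cell error would only be $O(h^2)$. That would give $T_5=O(\sqrt{kh^2})$, which need not vanish, so the choice of the midpoints $s_j$ is essential.
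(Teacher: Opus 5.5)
Your proof is correct and follows essentially the same route as the paper. The paper also expands $R(u,v;\cdot)$ to second order about each midpoint $s_j$ using the bounded $R''$, lets the linear term cancel by the midpoint choice, bounds the error by $O(h^3)$ per cell summed over at most $1/h$ cells, and concludes $\sqrt{k}h^2=\sqrt{kh^4}\to0$ (your concern about the Lagrange form is unnecessary, since existence of $R''$ already makes $R'$ continuous, but either remainder form works).
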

By combining Propositions \ref{prop: uniform convergence of I1}--\ref{prop: uniform convergence of I5}, we obtain that uniformly for all $(u,v)\in [0,T]^2$ and $j=1,2,\ldots, m$, as $n\to\infty$, 
\begin{equation} \label{eq:intermediate expansion}
\sqrt{k}\suit{\widehat{I}_{R,UU}(u,v;jh)- I_R(u,v;jh)}=\widetilde T_1(u,v;jh)+\widetilde T_4(u,v;jh)+o_P(1).    
\end{equation}
Note that for any $s\in(0,mh]$, $\widehat I_{R, UU}(u,v;s)$ is defined as a linear interpolation of two consecutive $\widehat I_{R, UU}(u,v;jh),$ for some $  j=0, 1, \ldots, m$, whereas for $s\in(mh,1]$ it is an extrapolation based on $\widehat I_{R,UU}(u,v;(m-1)h)$ and $\widehat I_{R,UU}(u,v;mh)$.
From (\ref{eq:intermediate expansion}), we readily obtain that uniformly for all $(u,v,s)\in [0,T]^2\times[0,1]$, as $n\to\infty$,
\begin{equation*} \sqrt{k}\suit{\widehat{I}_{R,UU}(u,v;s)-  I_R(u,v;s)}=\widetilde T_1(u,v;s)+\widetilde T_4(u,v;s)+o_P(1),  
\end{equation*}
where $\widetilde T_1$ and $\widetilde T_4$ are defined on $[0,T]^2\times[0,1]$ using a linear interpolation or extrapolation analogous  to the definition of $\widehat I_{R, UU}(u,v;s)$. 

Finally, the weak convergence of $\widetilde T_1(u,v;s)+\widetilde T_4(u,v;s)$ is  given by the  following proposition, which completes the proof of Theorem \ref{theorem:global estimator for multivariate}.
\begin{proposition} \label{prop: uniform convergence of w}
       Under the conditions in Theorem  \ref{theorem:global estimator for multivariate},  as $n\to\infty$, \begin{eqnarray*}&&\!\!\!\!\!\!\!\!\!\!\widetilde T_1(u,v;s)+\widetilde T_4(u,v;s)
      \\&&\qquad
      \leadsto  W_I(u,v;s)-\int_{0}^s R_1(u,v;w)W_I(u, \infty; dw)-\int_{0}^s R_2(u,v;w)W_I( \infty, v;dw), \end{eqnarray*}
    where $\leadsto$ denotes weak convergence on $D_{[0,T]^2\times[0,1]}$  equipped with the Skorokhod distance.
\end{proposition}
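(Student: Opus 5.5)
We prove Proposition~\ref{prop: uniform convergence of w} in the usual two steps: convergence of finite-dimensional distributions, then asymptotic tightness on $[0,T]^2\times[0,1]$.

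\textbf{Reduction to a sum of independent processes.}
We write, for $s=jh$,
\[
\widetilde T_1+\widetilde T_4=\sum_{i\le n s}\frac{1}{\sqrt k}\bigl(\xi_i(u,v)-E\xi_i(u,v)\bigr),
\]
where for $i\in I_j$
\[
\xi_i(u,v)=\mathbf{1}\{U_i<\tfrac{k}{n}\tfrac{u}{b_{X,j}},V_i<\tfrac{k}{n}\tfrac{v}{b_{Y,j}}\}
-R_1(u,v;s_j)\mathbf{1}\{U_i<\tfrac{k}{n}\tfrac{u}{b_{X,j}}\}
-R_2(u,v;s_j)\mathbf{1}\{V_i<\tfrac{k}{n}\tfrac{v}{b_{Y,j}}\}.
\]
This uses $\sqrt h\,w_{n,j}=k^{-1/2}\sum_{i\in I_j}(\cdot)$. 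The summands are independent across $i$. The linear interpolation in $s$ between grid points $jh$ only changes things by an amount controlled in the tightness step.

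\textbf{Finite-dimensional distributions.}
We use the Lindeberg--Feller CLT; the Lindeberg condition is trivial because $|\xi_i|\le 1+2\sup|R_1|+2\sup|R_2|$ and the summands are divided by $\sqrt k\to\infty$. For the covariances:
\begin{itemize}
\item By (\ref{eq:definition of R}), for $i\in I_j$,
\[
\tfrac nk P\bigl(U_i<\tfrac knu_1/b_{X,j},\,V_i<\tfrac knv_1/b_{Y,j}\bigr)
=\tfrac nk C_i\bigl(\tfrac kn c_X(i/n)u_1/b_{X,j},\,\tfrac kn c_Y(i/n)v_1/b_{Y,j}\bigr)
\to R\bigl(c_X(i/n)u_1/b_{X,j},\,c_Y(i/n)v_1/b_{Y,j};i/n\bigr)
\]
uniformly in $i$.
\item By Lipschitz continuity of $c_X,c_Y$ (Condition~\ref{eq:condition smooth of c and R}), $c_X(i/n)/b_{X,j}=1+O(h)$ uniformly, and $R$ is Lipschitz in $(u,v)$ with constant $1$. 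So this limit equals $R(u_1,v_1;s_j)+o(1)$ uniformly.
\item The squared-mean terms are $O(k/n)$ and are negligible.
\end{itemize}
Summing over $i\le ns$ and using continuity of $R$ and $R_1,R_2$ in $s$ (Condition~\ref{eq:condition partial derivatives of R}), the covariance of $\widetilde T_1+\widetilde T_4$ at $(u_1,v_1,s)$ and $(u_2,v_2,t)$ converges to a Riemann-sum limit. Since $R_1,R_2$ depend on $(u,v)$, the relevant integrands are $R(u_1\wedge u_2,v_1\wedge v_2;w)$ together with cross terms of the form
\[
R_1(u_1,v_1;w)\,R(u_1\wedge u_2,\,v_2;w),\qquad R_1(u_1,v_1;w)\,R_1(u_2,v_2;w)\,(u_1\wedge u_2),\ \text{etc.}
\]
These are exactly the covariances of $W_I(u,v;s)-\int_0^s R_1\,W_I(u,\infty;dw)-\int_0^s R_2\,W_I(\infty,v;dw)$, where the stochastic integrals are Wiener integrals in $w$ against the independent-increments process $W_I$. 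This identifies the limit.

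\textbf{Tightness (the main obstacle).}
The weights $R_1(u,v;s_j)$ depend on the parameter and change across the $m\to\infty$ blocks, so we cannot simply quote a standard tail-empirical-process result. Instead:
\begin{itemize}
\item Split $\xi_i$ into its three parts. The first part, $\sum_{i\le ns}k^{-1/2}(\mathbf 1\{\cdot\}-E)$, is a sequential, independent-non-identical empirical process indexed by the VC class of lower-left rectangles times $\{i\le ns\}$, with scaling $u/b_{X,j}$ absorbed into the set. For it we apply a bracketing/chaining functional CLT for triangular arrays, e.g.\ van der Vaart--Wellner Theorem 2.11.9 or Alexander's result for VC classes. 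The required $L_2$ bracketing entropy is controlled since $\tfrac nk P(\text{bracket})\lesssim \delta$ uniformly in $i$, by bounded $c$'s and the Lipschitz property of $R$.
\item For the other two parts we use summation by parts in $j$:
\[
\sum_{j\le s/h}R_1(u,v;s_j)\Delta_j(u),\qquad \Delta_j(u)=\sqrt h\,w_{n,j}(u/b_{X,j},\infty).
\]
Write this as $R_1(u,v;s)S_{s}(u)-\sum_j\bigl(R_1(u,v;s_{j+1})-R_1(u,v;s_j)\bigr)S_{jh}(u)$, where $S$ is the partial-sum process. Then we need tightness of the one-dimensional sequential process $S$ (again via the same empirical-process theorem), plus a bound on the total variation of $R_1$ in $s$ and its Lipschitz continuity in $(u,v)$ away from the origin from Condition~\ref{eq:condition partial derivatives of R}.
\item We expect that bounded variation in $s$ follows from continuity of $R_1$ combined with smoothness of $R$ in $s$. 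If it does not, we would instead approximate $R_1(u,v;\cdot)$ by a step function on a fixed coarse grid, using uniform continuity on compacts where $u\ge\eta$.
\item Near $u=0$, where $R_1$ need not be continuous, we use $0\le R_1\le 1$ together with $\sup_{u\le\eta}|S_s(u)|\to0$ in probability as $\eta\to0$; this follows from the small variance $\eta$.
\end{itemize}
Combining the finite-dimensional convergence with this tightness gives weak convergence in $D_{[0,T]^2\times[0,1]}$.
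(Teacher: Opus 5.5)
Your finite-dimensional argument (Lindeberg plus convergence of the covariance terms) matches the paper. So does handling the unweighted indicator part with a bracketing CLT for independent, non-identically distributed processes; the paper uses Theorem~3 of \citet{es21}, the analogue of van der Vaart--Wellner 2.11.9.

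The gap is in the tightness of the $R_1$- and $R_2$-weighted parts.
\begin{itemize}
\item Your summation-by-parts route needs $\sup_{(u,v)}$ of the total variation of $s\mapsto R_1(u,v;s)$ to be finite. This is not among the hypotheses. Condition~\ref{eq:condition partial derivatives of R} bounds $\partial_s^2R$ and makes $R_1$ Lipschitz in $(u,v)$. The natural interpolation argument then gives only H\"older-$\frac12$ continuity of $R_1$ in $s$, and continuity does not imply bounded variation.
\item The fallback does not close the gap. After replacing $R_1(u,v;s_j)$ by a coarse step function you are left with $\sum_{j\le s/h}\{R_1(u,v;s_j)-R_1(u,v;\sigma_{l(j)})\}\Delta_j(u)$. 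Its weights are small but still vary with $j$ and $(u,v)$. Showing it is uniformly small in $(u,v,s)$ needs a maximal inequality for an empirical process with parameter-dependent weights, which is exactly the problem you set out to avoid.
\item The step near $u=0$ fails as written. With weights satisfying only $0\le a_j\le 1$, $|\sum_{j\le J}a_j\Delta_j(u)|$ is not bounded by $\max_l|S_l(u)|$; Abel's inequality needs the variation of $(a_j)$. The crude bound $\sum_j|\Delta_j(u)|$ is of order $\sqrt{u/h}\to\infty$.
\end{itemize}

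The missing observation is that no regularity of $R_1$ in $s$ is needed. The weight $R_1(u,v;s_{j(i)})$ is attached to the summation index $i$, not to the parameter $s$. So the bracketing CLT applies directly to the summands $Z_{n,i}(u,v;s)$ with these weights included; the paper also builds the linear-interpolation factor in $s$ into $Z_{n,i}$. The paper's argument runs as follows.
\begin{itemize}
\item It takes cells of size $\varepsilon^4\times\varepsilon^4\times\varepsilon^2$. There are polynomially many, so the entropy condition is immediate.
\item On a cell whose lower corner has $u_1\ge\varepsilon^2$, homogeneity of order $0$ and Lipschitz continuity bound the oscillation of $R_1(\cdot,\cdot;s_j)$ by $K(T+1)\varepsilon/\sqrt{u_1}$, uniformly in $j$.
\item The blow-up in $u_1$ is compensated by $\P(b_{X,j}U_i\le ku_1/n)\le 2ku_1/n$.
\item For $u_1<\varepsilon^2$, the indicator alone contributes $O(k\varepsilon^2/n)$.
\end{itemize}
Summing over $i$ and dividing by $k$ gives the required $O(\varepsilon^2)$ bound in the bracketing condition, uniformly over the blocks.
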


\subsection{Proof of the Lemmas and Propositions}

\begin{proof}[Proof of Lemma \ref{lemma: delta_F, general s}]
   Firstly, recall that $\underline{c}_X\leq c_X(s)\leq \overline{c}_X$ for all $s\in [0,1]$. Condition \ref{eq:condition heteroskedastic extremes} implies that there exists $x_0<x^*$ such that for all $x>x_0$ and $1\leq i\leq n$
   $$\frac{\underline{c}_X}{2}\leq \frac{1-F_i(x)}{1-F_0(x)}\leq 2\overline{c}_X.$$
   Since $\min_{1\leq j\leq m} \underline{H}_{F,j}(kT/n) =\min_{1\leq i\leq n} H_{F,i}(kT/n)\to x^*$ as $n\to\infty$,  we can thus replace $x$ by $\underline{H}_{F,j}(kT/n)$ uniformly over $j$, for sufficiently large $n$. This yields that for sufficiently large $n$, uniformly for all $1\leq j\leq m$ and for all $i\in I_j$,
   $$
   \frac{1}{1-F_0(\underline{H}_{F,j}(kT/n))}\geq \frac{\underline{c}_X}{2}\frac{1}{1-F_i(\underline{H}_{F,j}(kT/n))}\geq\frac{\underline{c}_X}{2\overline{c}_X}\frac{1}{\frac{1-F_i}{c_X(i/n)}\suit{\underline{H}_{F,j}(kT/n)}},
   $$
   which implies that
   \begin{equation} \label{eq:temp4}
      \frac{1}{1-F_0(\underline{H}_{F,j}(kT/n))}\geq \frac{\underline{c}_X}{2\overline{c}_X}\frac{1}{\min_{i\in I_j}\frac{1-F_i}{c_X(i/n)}\suit{\underline{H}_{F,j}(kT/n)}}=\frac{\underline{c}_X}{2\overline{c}_X T}\frac{n}{k}\, .
   \end{equation}
   Here in the last step, we use the fact that $\underline{H}_{F,j}=\min_{i\in I_j}H_{F,i}(x)$ is the left-continuous inverse function of $\min_{i\in I_j}\suit{\frac{1-F_i}{c_X(i/n)}}$.
   
   Next, by applying Condition \ref{eq:condition heteroskedastic extremes} to any $i,l\in I_j$, we get that as $x\to\infty$,
   $$\frac{1-F_i(x)}{1-F_l(x)}=\frac{c_X\suit{\frac{i}{n}}}{c_X\suit{\frac{l}{n}}}\suit{1+O\suit{A_X\suit{\frac{1}{1-F_0(x)}}}}.$$
   We replace $x$ by $\underline{H}_{F,j}(ku/n)$ for $u\in[0,T]$ in the equation above to obtain that as $n\to\infty$,    
   \begin{equation} \label{eq:temp5}
   \frac{1-F_i(\underline{H}_{F,j}(ku/n))}{1-F_l(\underline{H}_{F,j}(ku/n))}=\frac{c_X\suit{\frac{i}{n}}}{c_X\suit{\frac{l}{n}}}\suit{1+O\suit{A_X\suit{\frac{1}{1-F_0(\underline{H}_{F,j}(ku/n))}}}}.
   \end{equation}
   Note that $A_X$ is eventually non-increasing. Together with \eqref{eq:temp4}, we have that for  large $n$,
   $$A_X\suit{\frac{1}{1-F_0(\underline{H}_{F,j}(ku/n))}}\leq A_X\suit{\frac{1}{1-F_0(\underline{H}_{F,j}(kT/n))}}\leq A_X\suit{\frac{\underline{c}_X}{2\overline{c}_X T}\frac{n}{k}},$$
   uniformly for all $1\leq j\leq m$.
   Since the sequence $k$ satisfies Condition \ref{eq:condition k extra}, we can further write \eqref{eq:temp5} as
   $$\frac{1-F_i(\underline{H}_{F,j}(ku/n))}{1-F_l(\underline{H}_{F,j}(ku/n))}=\frac{c_X\suit{\frac{i}{n}}}{c_X\suit{\frac{l}{n}}}\suit{1+o\suit{\frac{1}{\sqrt{k}}}},$$
   where the term $o\suit{\frac{1}{\sqrt{k}}}$ is uniform for all $u\in[0,T]$, $i,l\in I_j$ and $1\leq j\leq m$.

   By rewriting the equation as 
    $$\frac{1-F_i(\underline{H}_{F,j}(ku/n))}{c_X\suit{\frac{i}{n}}}=\frac{1-F_l(\underline{H}_{F,j}(ku/n))}{c_X\suit{\frac{l}{n}}}\suit{1+o\suit{\frac{1}{\sqrt{k}}}},$$
   and taking the minimum over all $l\in I_j$ on both sides, we proved the lemma.
\end{proof}

\begin{proof}[Proof of Lemma \ref{lemma: asymptotics for the quantiles}]
We prove the first statement in the lemma. The second statement holds analogously.

Let $\Gamma_{j}(u):=\frac{1}{nh}\sum_{i\in I_j}\ll{U_i< u}$, the empirical distribution function based on $nh$ independent random variables $\set{U_i: i\in I_j}$. 
Obviously, $\E \Gamma_{j}(u)=\frac{1}{nh}\sum_{i\in I_j}c_X(i/n)u=b_{X,j}u. $  Let 
$\alpha_j(u)=\sqrt{nh}(\Gamma_j(u)-b_{X,j}u)$, for $u\geq 0$, be the corresponding  empirical process. Finally, denote  $w_{X,j}(u)=\sqrt{\frac{n}{k}} \alpha_j(ku/n)$ for  $u\geq 0$ as the corresponding tail empirical process. 
 
Then for large  $a>0$,
\begin{align*}
     &\P\left(\max_{1\leq j\leq m}\sup_{u\in[0,T]}\abs{b_{X,j}\frac{n}{k}U^{(j)}_{\lfloor khu \rfloor+1,nh}-u}\geq a\sqrt{\frac{m\log n}{k}}\right)\\
      =&\P\left(\max_{1\leq j\leq m}\sup_{u\in[0,T]}\abs{b_{X,j}\frac{n}{k}\Gamma^{-1}_{j}\suit{\frac{ku}{n}}-u}\geq a\sqrt{\frac{m\log n}{k}}\right)\\
      \leq &\P\left(\max_{1\leq j\leq m}\sup_{u\in[0,2T]}\abs{\frac{n}{k}\Gamma_{j}\suit{\frac{ku}{b_{X,j}n}}-u}\geq a\sqrt{\frac{m\log n}{k}}\right)\\
      = &\P\left(\max_{1\leq j\leq m}\sup_{u\in[0,2T]}\abs{w_{X,j}\suit{\frac{u}{b_{X,j}}}}\geq a\sqrt{\log n}\right)\\
      \leq & \sum_{j=1}^{m}\P\left(\sup_{u\in[0,2T]}\abs{w_{X,j}\suit{\frac{u}{b_{X,j}}}}\geq a\sqrt{\log n}\right)\\
      \leq&  \sum_{j=1}^{m} \P\left(\sup_{u\in[0,2kT/n]}\abs{ \alpha_j\suit{\frac{u}{b_{X,j}}}}\geq a\sqrt{\frac{k\log n}{n}}\right).
     \end{align*}
To further bound the probabilities above, we apply Corollary 2.9 in \citet{a84} with the upper bound therein  $16\exp(-M^2/(4\alpha))$, where $M=a\sqrt{\frac{k\log n}{n}}$ and $\alpha=2kT/n$ (which does not depend on $j$). Here we use the fact that $kh/\log n \to \infty$ as $n\to\infty$. Then the statement is proved by taking $a^2>8T$: as $n\to\infty$,
\begin{align*}
   &\P\left(\max_{1\leq j\leq m}\sup_{u\in[0,T]}\abs{b_{X,j}\frac{n}{k}U^{(j)}_{\lfloor khu \rfloor+1,nh}-u}\geq a\sqrt{\frac{m\log n}{k}}\right)\\
     \leq & 16 \sum_{j=1}^{m}  \exp\left(-\frac{ a^2k (\log n) n}{4 n \cdot 2kT}    \right) 
\leq  16 m \exp\left(-\frac{ a^2
\log n }{8T}    \right)\to 0.
   \end{align*} 
   
   The last two statements are proved by  relabeling $k$ with $k^{1-b}$ in the proof. Here we use the fact that  $k^{1-b}h/\log n \to \infty$ as $n\to\infty$.
\end{proof}

\begin{proof}[Proof of Proposition \ref{prop: uniform convergence of I1}]
Let $\varepsilon>0$. Then
\begin{align*}
   & \P(\max_{s \in\set{h, 2h, \ldots, mh}} \sup_{0\leq u,v\leq T} |T_1(u,v;s)-\widetilde T_1(u,v;s)|\geq \varepsilon )\\
\leq & \P\left(\sum_{j=1}^{m} \sup_{0\leq u,v\leq T} \left|w_{n,j}(\hat u_j,\hat v_j)-w_{n,j}\suit{\frac{u}{b_{X,j}}, \frac{v}{b_{Y,j}}}\right|\geq \varepsilon\sqrt{m} \right)\\
\leq & 
\sum_{j=1}^{m}  \P\left( \sup_{0\leq u,v\leq T}\sqrt{\frac{k}{n}} \, \left|w_{n,j}(\hat u_j,\hat v_j)-w_{n,j}\suit{\frac{u}{b_{X,j}}, \frac{v}{b_{Y,j}}}\right|\geq \varepsilon\sqrt{\frac{k}{nm}} \right).
\end{align*}
 We apply Corollary 2.9 in \citet{a84} again with the upper bound therein  $16\exp(-M^2/(4\alpha))$, $M=\varepsilon\sqrt{\frac{k}{nm}}$ and $\alpha=a\sqrt{km\log n}/n$; here $a$ is large. 
This yields that the sum of the  $m$  probabilities above is further bounded by
$$
16m\exp\left(-\frac{\varepsilon^2k\cdot n}{4nm\cdot a\sqrt{km\log n}}\right)+o(1)
\leq16\exp\left(-\left(\frac{\varepsilon^2k^{1/2}}{4am^{3/2}(\log n)^{1/2}}-\log n\right) \right)+o(1).$$ 
In order to apply Corollary 2.9 in \citet{a84} we have used 
$k/(m^3\log^3 n) =kh^3/\log^3 n\to \infty$, as $n\to \infty$, see Condition \ref{eq:condition on k}.  This assumption 
also yields that the latter probability bound tends to 0.
\end{proof}

\begin{proof}[Proof of Proposition \ref{prop: uniform convergence of I2}]
   Following Condition \ref{eq:condition k extra}, we can replace all terms in $T_2$ of the form
  $  \frac{n}{k}C_i\suit{\frac{k}{n}\,\cdot, \frac{k}{n} \,\cdot  }$
   by $R(\cdot, \cdot; i/n)$ uniformly, with negligible error. That means, it remains to show that uniformly
   \begin{align*}
      &\frac{1}{n}\sum_{j=1}^{s/h}\sum_{i\in I_j}R\suit{c_X\suit{\frac{i}{n}}\hat u_j, c_Y\suit{\frac{i}{n}}\hat v_j;\frac{i}{n}}\\    
    &=\frac{1}{n}\sum_{j=1}^{s/h}\sum_{i\in I_j}R\suit{c_X\suit{\frac{i}{n}}\hat u_j,c_Y\suit{\frac{i}{n}}\hat v_j;s_j}
    +o_P\left(\frac{1}{\sqrt{k}}\right).
   \end{align*}
   Since $R''$, and hence $R'$, are  bounded, by some common constant  
    $C>0$, we have   that for all $1\leq j\leq m$ and for all $i\in I_j$,
$$ R\suit{u,v;\frac{i}{n}}\leq R(u,v;s_j)+\suit{\frac{i}{n}-s_j}R'(u,v;s_j)+C\suit{\frac{i}{n}-s_j}^2,$$
and a similar lower bound with  $C$ replaced by $-C$.
Consequently, using  $\sum_{i\in I_j}\suit{\frac{i}{n}-s_j}=\frac{h}{2}$,
\begin{align*}
  &\sum_{i\in I_j}R\suit{c_X\suit{\frac{i}{n}}\hat u_j, c_Y\suit{\frac{i}{n}}\hat v_j;\frac{i}{n}}-R\suit{c_X\suit{\frac{i}{n}}\hat u_j,c_Y\suit{\frac{i}{n}}\hat v_j;s_j}\\
  \leq &\,  C\left(\frac{h}{2}+\sum_{i\in I_j}\suit{\frac{i}{n}-s_j}^2\right)\leq 2 Cnh^3. 
\end{align*}
Observe that a lower bound can be established in a similar way. Hence, we have that as $n\to\infty$,
$T_2=\sqrt{k}O\left (\frac{1}{nh}nh^3\right)=O(\sqrt k h^2)\to 0,$
due to the condition  $kh^4\to 0$.
\end{proof}

\begin{proof}[Proof of Proposition \ref{prop: uniform convergence of I3}]
We will show that 
uniformly for all $(u,v,s)\in [0,T]^2\times 
\{jh: j=1, \ldots, m\}$, as $n\to\infty$,
$$\sqrt{k}\frac{1}{n}\sum_{j=1}^{s/h}\left|\sum_{i\in I_j}\left(R\suit{c_X\suit{\frac{i}{n}}\hat u_j,c_Y\suit{\frac{i}{n}}\hat v_j;s_j}
 -R\suit{b_{X,j}\hat u_j,b_{Y,j}\hat v_j;s_j}\right)\right|=o_P(1).$$
 Using the mean-value theorem and triangular inequalities, there exist  $(\check{u}_{j,i}, \check{v}_{j,i})$  between $(c_X(i/n)\hat u_j, c_Y(i/n)\hat v_j)$   and $(b_{X,j}\hat u_j, b_{Y,j}\hat v_j)$, for all $j=1, \dots, m$ and $i\in I_j$, such that
\begin{eqnarray*} &&\sqrt{k}\frac{1}{n}\sum_{j=1}^{s/h}\left|\sum_{i\in I_j}\left(R\suit{c_X\suit{\frac{i}{n}}\hat u_j,c_Y\suit{\frac{i}{n}}\hat v_j;s_j}
 -R\suit{b_{X,j}\hat u_j,b_{Y,j}\hat v_j;s_j}\right)\right|\\
&& \leq\sqrt{k}\frac{1}{n}\sum_{j=1}^{s/h}\left|\sum_{i\in I_j}R_1\suit{\check u_{j,i},\check v_{j,i};s_j}
\hat u_j\left(c_X\suit{\frac{i}{n}}-b_{X,j}\right)
\right|
\\&&\qquad + \sqrt{k}\frac{1}{n}\sum_{j=1}^{s/h}\left|\sum_{i\in I_j}R_2\suit{\check u_{j,i},\check v_{j,i};s_j}
\hat v_j\left(c_Y\suit{\frac{i}{n}}-b_{Y,j}\right)
\right|.
\end{eqnarray*}
   The terms with $R_1$ and those with $R_2$ can be handled similarly.  Therefore we confine ourselves  to the first expression on the right. 
  We have 
\begin{eqnarray*}
&& \sqrt{k}\frac{1}{n}\sum_{j=1}^{s/h}\left |\sum_{i\in I_j}R_1\suit{\check u_{j,i},\check v_{j,i};s_j}
\hat u_j\left(c_X\suit{\frac{i}{n}}-b_{X,j}\right)
\right|\\&&=
\sqrt{k}\frac{1}{n}\sum_{j=1}^{s/h}\left |\sum_{i\in I_j}\left(R_1\suit{b_{X,j}\hat u_j,b_{Y,j}\hat v_j;s_j}
\hat u_j\left(c_X\suit{\frac{i}{n}}-b_{X,j}\right)\right.\right.
\\&&\qquad+\left.\left.
\left(R_1\suit{\check u_{j,i},\check v_{j,i};s_j}-R_1\suit{b_{X,j}\hat u_j,b_{Y,j}\hat v_j;s_j}\right)
\hat u_j\left(c_X\suit{\frac{i}{n}}-b_{X,j}\right)\right)
\right|\\&&=
\sqrt{k}\frac{1}{n}\sum_{j=1}^{s/h}\left |\sum_{i\in I_j}
\left(R_1\suit{\check u_{j,i},\check v_{j,i};s_j}-R_1\suit{b_{X,j}\hat u_j,b_{Y,j}\hat v_j;s_j}\right)
\hat u_j\left(c_X\suit{\frac{i}{n}}-b_{X,j}\right)
\right|
\\&&\leq
\sqrt{k}\frac{1}{n}\sum_{j=1}^{s/h}\sum_{i\in I_j}\left |
\left(R_1\suit{\check u_{j,i},\check v_{j,i};s_j}-R_1\suit{b_{X,j}\hat u_j,b_{Y,j}\hat v_j;s_j}\right)
\hat u_j\left(c_X\suit{\frac{i}{n}}-b_{X,j}\right)
\right|.
\end{eqnarray*}
Here for the equality, we use the definition that $\sum_{i\in I_j}\left(c_X\suit{\frac{i}{n}}-b_{X,j}\right)=0$ for all $j$.
Now assume for all $j=1, \dots, s/h$, that  $b_{X,j}\hat u_j\leq b_{Y,j}\hat v_j$. The other $2^{s/h}-1$ situations can be handled in the same way. Then, using the homogeneity of order 0 of $R_1$ and its Lipschitz continuity (since the second coordinate is larger than $1/2$), the last bound is in turn bounded~by 
\begin{eqnarray*}
&&
\sqrt{k}\frac{1}{n}\sum_{j=1}^{s/h}\sum_{i\in I_j}\left|\left(
R_1\left(\frac{c_X\suit{\frac{i}{n}}\hat u_j}{b_{Y,j}\hat v_j},
\frac{c_Y\suit{\frac{i}{n}}\hat v_j}{b_{Y,j}\hat v_j};s_j\right)-R_1\suit{\frac{b_{X,j}\hat u_j}{ b_{Y,j}\hat v_j},1;s_j}\right)
\hat u_j\left(c_X\suit{\frac{i}{n}}-b_{X,j}\right)
\right|\\&&
\leq \sqrt{k}\frac{1}{n}\sum_{j=1}^{s/h}\sum_{i\in I_j} \frac{K
\left(\hat u_j^2\left(c_X\suit{\frac{i}{n}}-b_{X,j}\right)^2+\hat u_j\hat v_j\left|c_X\suit{\frac{i}{n}}-b_{X,j}\right|\cdot
\left|c_Y\suit{\frac{i}{n}}-b_{Y,j}\right|\right)}{b_{Y,j}\hat v_j}
\\&&
\leq \sqrt{k}\frac{1}{n}\sum_{j=1}^{s/h}\sum_{i\in I_j} \frac{K
\left(\hat u_j\left(c_X\suit{\frac{i}{n}}-b_{X,j}\right)^2+\hat v_j\left|c_X\suit{\frac{i}{n}}-b_{X,j}\right|\cdot
\left|c_Y\suit{\frac{i}{n}}-b_{Y,j}\right|\right)}{b_{X,j}}
\\&&
=O_P\left(\sqrt{k} \frac{1}{n} \frac{1}{h} \cdot nh \cdot h^2 \right)=o_P(1).
\end{eqnarray*}
\end{proof}
\begin{proof}[Proof of Proposition \ref{prop: uniform convergence of I4}]
Write  $\hat u_{j0}=\frac{n}{k}U^{(j)}_{\lfloor khu \rfloor+1,nh}$ and define $\hat v_{j0}$ correspondingly. By the Lipschitz continuity  of $R$ and  Lemma \ref{lemma: delta_F, general s} we have uniformly 
 \begin{eqnarray*}&&\sqrt{k} h\sum_{j=1}^{s/h}\left(
R\suit{b_{X,j}\hat u_j,b_{Y,j}\hat v_j;s_j}-R\suit{b_{X,j}\hat u_{j0},b_{Y,j}\hat v_{j0};s_j}\right)\\
&&=\sqrt{k} h\sum_{j=1}^{s/h}\left(
R\suit{b_{X,j}\hat u_{j0}(1+\Delta),b_{Y,j}\hat v_{j0}(1+\Delta);s_j}-R\suit{b_{X,j}\hat u_{j0},b_{Y,j}\hat v_{j0};s_j}\right)=o_P(1).
    \end{eqnarray*}
By the mean-value theorem we have
 \begin{eqnarray*}&&\sqrt{k} h\sum_{j=1}^{s/h}\left(
R\suit{b_{X,j}\hat u_{j0},b_{Y,j}\hat v_{j0};s_j}-R(u,v;s_j)\right)\\
    &&=\sqrt{k} h\sum_{j=1}^{s/h}\left(R_1(\check u_j,
      \check v_j;s_j)(b_{X,j}\hat u_{j0}-u)+R_2(\check u_j,\check v_j;s_j)(b_{Y,j}\hat v_{j0}-v)\right),
   \end{eqnarray*}
   with  $(\check{u}_j, \check{v}_j)$  between $(b_{X,j}\hat u_{j0},b_{Y,j}\hat v_{j0})$  and $(u,v)$, $j=1, \dots, s/h$. The terms with $R_1$ and those with $R_2$ on the right are similar.  Therefore we proceed in this proof with only the $R_1$-expression and omit the 
   $R_2$-expression, also for $\widetilde T_4$. 

We first show that uniformly
$$\sqrt{k} h\sum_{j=1}^{s/h}(R_1(\check u_j,
      \check v_j;s_j)-R_1(u,v;s_j))(b_{X,j}\hat u_{j0}-u)=o_P(1). $$
      We consider three regions for $(u,v)$. We begin with   $[0,1/k^{1/3}]\times [0,T]$.   Note that $0\leq R_1\leq 1$. Now, uniformly for $u\in [0,1/k^{1/3}]$,
      $$|\sqrt{k} h\sum_{j=1}^{s/h}(R_1(\check u_j,
      \check v_j;s_j)-R_1(u,v;s_j))(b_{X,j}\hat u_{j0}-u)|
     \leq  \sqrt{k}h\sum_{j=1}^{s/h}|b_{X,j}\hat u_{j0}-u|.$$
To bound this we use the second half of Lemma \ref{lemma: asymptotics for the quantiles} with $b=1/3$, which yields an uniform  upper bound as
$$\sqrt{k}h \cdot m\cdot O_P\left(\frac{(\log n)^{1/2}}{h^{1/2}k^{2/3}}\right)=O_P\left(\left(\frac{(\log n)^{3}}{h^{3}k }\right)^{1/6}\right)=o_p(1).$$
Next we consider  $(u,v)\in ([1/k^{1/6}, T] \times [0,T]) \cup 
([0, T] \times [1/k^{1/6},T]) $. Then using that $R_1$ is homogeneous of order 0 and Lipschitz continuous, we have with probability tending to~1:
\begin{align*}|R_1(\check u_j,\check v_j;s_j)-R_1(u,v;s_j)|
&=|R_1(k^{1/6}\check u_j,k^{1/6}\check v_j;s_j)-R_1(k^{1/6}u,k^{1/6}v;s_j)|\\
&\leq Kk^{1/6}(|\check u_j-u|+|\check v_j-v|).\end{align*}
Hence,  by the first half of Lemma \ref{lemma: asymptotics for the quantiles}, uniformly on the region,\begin{eqnarray*}&&\sqrt{k} h
\sum_{j=1}^{s/h}
(R_1(\check u_j,
      \check v_j;s_j)-R_1(u,v;s_j))(b_{X,j}\hat u_{j0}-u)
\\
&&\qquad= \sqrt{k}h\cdot m\cdot Kk^{1/6}O_P\left(\frac{\log n }{hk}\right)=O_P\left(\left(\frac{(\log n)^{3}}{h^{3}k }\right)^{1/3}\right)=o_p(1).
\end{eqnarray*}
It remains to consider  $(u,v)\in [1/k^{1/3}, 1/k^{1/6}] \times [0,1/k^{1/6}]$.  With probability tending to 1,
\begin{align*}|R_1(\check u_j,\check v_j;s_j)-R_1(u,v;s_j)|
&=|R_1(k^{1/3}\check u_j,k^{1/3}\check v_j;s_j)-R_1(k^{1/3}u,k^{1/3}v;s_j)|\\&
\leq Kk^{1/3}(|\check u_j-u|+|\check v_j-v|).\end{align*}
      Hence  by the second half of Lemma \ref{lemma: asymptotics for the quantiles} with $b=1/6$, uniformly on the region,\begin{eqnarray*}
         &\sqrt{k} h\sum_{j=1}^{s/h}(R_1(\check u_j,
      \check v_j;s_j)-R_1(u,v;s_j))(b_{X,j}\hat u_{j0}-u)\\
&\qquad= \sqrt{k}h\cdot m\cdot Kk^{1/3}O_P\left(\frac{\log n }{hk^{7/6}}\right)=O_P\left(\left(\frac{(\log n)^{3}}{h^{3}k }\right)^{1/3}\right)=o_p(1).
\end{eqnarray*}

It remains to 
show that uniformly $$\sqrt{k}h\sum_{j=1}^{s/h}R_1(u,v;s_j)(b_{X,j}\hat u_{j0}-u)
 +\sqrt{h}\sum_{j=1}^{s/h}R_1(u,v;s_j)w_{n,j}\left(\frac{u}{b_{X,j}},\infty\right)        =o_P(1). $$
For this it suffices to show that
\begin{equation}\label{ists}\sqrt{\frac{1}{h}}\max_{1\leq j\leq m} \sup_{0\leq u \leq T}\left|
\sqrt{kh} (b_{X,j}\hat u_{j0}-u)+w_{n,j}(u/b_{X,j},\infty)\right|=o_P(1).\end{equation}
Note that, almost surely,
$$\sqrt{\frac{1}{h}}\max_{1\leq j\leq m} \sup_{0\leq u \leq T}\left|
\sqrt{kh}\left (\frac{n}{k}\Gamma_{j}(\Gamma_j^{-1}(uk/n))-u\right)\right|\leq\sqrt{\frac{1}{h}}\sqrt{kh}\frac{1}{k}=\frac{1}{\sqrt{k}}.$$
By replacing $u$ in the first term in (\ref{ists}) with $(n/k) (\Gamma_j(\Gamma_j^{-1}(uk/n))$, in turn it now suffices to show that 
$$\sqrt{1/h}\max_{1\leq j\leq m} \sup_{0\leq u \leq T}\left|-w_{n,j}((n/k) \Gamma_j^{-1}(uk/n),\infty)
+w_{n,j}(u/b_{X,j},\infty)\right|=o_P(1).
$$
Let $\varepsilon>0$. We have  for large $a>0$
\begin{eqnarray*}&&\mathbb{P}\left(\max_{1\leq j\leq m} \sup_{0\leq u \leq T}\left|-w_{n,j}((n/k)\Gamma_j^{-1}(uk/n),\infty)
+w_{n,j}(u/b_{X,j},\infty)\right|\geq \varepsilon\sqrt{h}\right)\\
&&\leq \mathbb{P}\!\left(\max_{1\leq j\leq m} \sup_{\substack{0\leq u \leq T,\\ |v-u|\leq a\sqrt{(\log n)/(kh)} }}\!\left|w_{n,j}(u/b_{X,j},\infty)
-w_{n,j}(v/b_{X,j},\infty)\right|\geq \varepsilon\sqrt{h}\right)+o(1)\\
&&\leq \sum_{j=1}^m 
\mathbb{P}\!\left(\sup_{\substack{0\leq u \leq T,\\ |v-u|\leq a\sqrt{(\log n)/(kh)} }}\!\left|w_{n,j}(u/b_{X,j},\infty)
-w_{n,j}(v/b_{X,j},\infty)\right|\geq \varepsilon\sqrt{h}\right)+o(1)\\
&&= \sum_{j=1}^m 
\mathbb{P}\!\left(\sup_{\substack{0\leq x \leq kT/n,\\ |y-x|\leq (a/n)\sqrt{(k\log n)/h} }}\!\left|\alpha_j(x/b_{X,j})
-\alpha_j(y/b_{X,j})\right|\geq \varepsilon\sqrt{hk/n}\right)+o(1). 
\end{eqnarray*}
Now using again  Corollary 2.9 in \citet{a84} we
have very similarly as in the proof of Proposition  \ref{prop: uniform convergence of I1} that this last expression tends to 0. \end{proof}

\begin{proof}
[Proof of Proposition \ref{prop: uniform convergence of I5}]
 From Condition \ref{eq:condition partial derivatives of R}
  we have for some $C>0$ that for all $1\leq j\leq m$ and all $t\in[s_j-h,s_j+h]$
   \begin{align*}
   &R(u,v;s_j)+(t-s_j)R'(u,v;s_j)-C(t-s_j)^2\\
\leq &R(u,v;t)\leq R(u,v;s_j)+(t-s_j)R'(u,v;s_j)+C(t-s_j)^2.\end{align*}
  Hence for $s\in\{jh:1\leq j\leq m\}$
   \begin{align*}
      I_R(u,v;s)=&\sum_{j=1}^{s/h}\int_{(j-1)h}^{jh}R(u,v;t)dt
      \\
      \leq &\sum_{j=1}^{s/h}\int_{(j-1)h}^{jh}\left[R(u,v;s_j)+(t-s_j)R'(u,v;s_j)+C(t-s_j)^2\right]dt\\
\leq&h\sum_{j=1}^{s/h}R(u,v;s_j)+C\frac{s}{h}h^3
    \leq h\sum_{j=1}^{s/h}R(u,v;s_j)
      +Ch^2.
   \end{align*}
   A similar relation holds for the lower bound on $I_R(u,v;s)$. Therefore, uniformly for all $s\in\set{jh:j=1,2,\ldots,m}$,
   $ T_5=O(\sqrt{k}h^2)\to 0$  as $n\to\infty$,
  by  Condition \ref{eq:condition on k}.
   \end{proof}

   \begin{proof}[Proof of Proposition \ref{prop: uniform convergence of w}] We begin with showing that the finite-dimensional distributions of $\widetilde T_1+\widetilde T_4$  converge in distribution  to those of the limiting process, briefly denoted by $L=L(u,v;s)$.  For this we  first observe that the  two implicitly mentioned interpolation terms 
   of $\widetilde T_1+\widetilde T_4$ converge in distribution  to the 0-function uniformly and hence can be ignored, i.e., we need to show the convergence in distribution of the   finite-dimensional distributions  of the sum of the main expressions of $\widetilde T_1$ and $\widetilde T_4$, which 
   can be written as 
\begin{eqnarray*}
&&L_n(u,v;s):=\frac{1}{\sqrt{k}}\sum_{i=1}^{\lfloor s/h \rfloor nh}\Bigl(1_{[U_i\leq ku/(b_{X,j}n), V_i\leq kv/(b_{Y,j}n)]}-C_i\left(\frac{c_X(i/n)ku}{b_{X,j}n}, \frac{c_Y(i/n)kv}{b_{Y,j}n}\right)\Bigr)\\
&&\qquad-\frac{1}{\sqrt{k}}\sum_{i=1}^{\lfloor s/h \rfloor nh}R_1(u,v;s_j) \left(1_{[U_i\leq ku/(b_{X,j}n)]}-c_X(i/n)ku/(b_{X,j}n)\right)
\\&&\qquad-\frac{1}{\sqrt{k}}\sum_{i=1}^{\lfloor s/h \rfloor nh}R_2(u,v;s_j) \left(1_{[V_i\leq kv/(b_{Y,j}n)]}-c_Y(i/n)kv/(b_{Y,j}n)\right),
\end{eqnarray*}
where  for each $i=1, \ldots, n$, the index $j=j(i)$ is defined by $i\in I_j$.

Using the  Cram\'er-Wold device it is sufficient to show
that $\sum_{l=1}^r \lambda_l L_n(u_l,v_l;t_l)$ converges in distribution to $\sum_{l=1}^r \lambda_l L(u_l,v_l;t_l)$. This will boil down to showing that  $Var(\sum_{l=1}^r \lambda_l L_n(u_l,v_l;t_l)) \to Var(\sum_{l=1}^r \lambda_l L(u_l,v_l;t_l))$. Calculating these variances is rather cumbersome, but essentially the same as calculating the variances when  $r=2$, since all possible types of individual variances and covariances  are already represented then. Therefore for the sake of presentation  we restrict ourselves to the case $r=2$.
Hence we need to show that that $$\lambda_1 L_n(u_1,v_1;s)+ \lambda_2 L_n (u_2,v_2;t)\stackrel{d}{
\to}\lambda_1 L(u_1,v_1;s)+ \lambda_2 L(u_2,v_2;t).$$
For this we use the Lindeberg CLT. Since indicators and the partial derivatives of a tail copula are both bounded by 1, the Lindeberg condition is trivially satisfied.
So it remains to show that  $$Var(\lambda_1 L_n(u_1,v_1;s)+ \lambda_2 L_n(u_2,v_2;t))\to Var(\lambda_1 L(u_1,v_1;s)+ \lambda_2 L(u_2,v_2;t)),$$ and for this it sufficient to prove that for all
$(u_1,v_1,s), (u_2,v_2,t) \in [0,T]^2\times [0,1]$,
$$Cov( L_n(u_1,v_1;s), L_n(u_2,v_2;t))\to Cov (L(u_1,v_1;s),L(u_2,v_2;t)).$$ 
The latter covariance
 is equal to
 \begin{eqnarray*}
&&
\int_0^{s\wedge t}R(u_1\wedge u_2,v_1\wedge v_2; w)dw\\
&&\qquad -\int_0^{s\wedge t}R_1(u_2,v_2; w)R(u_1\wedge u_2, v_1; w)dw-\int_0^{s\wedge t}R_2(u_2,v_2; w)R(u_1, v_1\wedge v_2; w)dw\\
&&\qquad  -\int_0^{s\wedge t}R_1(u_1,v_1; w)R(u_1\wedge u_2, v_2; w)dw-\int_0^{s\wedge t}R_2(u_1,v_1; w)R(u_2, v_1\wedge v_2; w)dw\\
&& +(u_1 \wedge u_2)\int_0^{s\wedge t}R_1(u_1,v_1; w)R_1(u_2, v_2; w)dw +(v_1 \wedge v_2)\int_0^{s\wedge t}R_2(u_1,v_1; w)R_2(u_2, v_2; w)dw\\
&&\!\!  +\!\int_0^{s\wedge t}\!\!\!R_1(u_1,v_1; w)R_2(u_2, v_2; w)R(u_1,v_2; w)dw+\int_0^{s\wedge t}\!\!R_1(u_2,v_2; w)R_2(u_1, v_1; w)R(u_2,v_1; w)dw.
\end{eqnarray*}
We would like to show  for each of these 9 terms, that the   corresponding term based on $L_n$ (see below) converges to this term. 
We will actually  show the convergence to the $1^{st}$ term and to $8^{th}$ term. The convergence to the other terms is similar (term 9) or somewhat easier to show (terms 2-7) than the convergence to the $8^{th}$ term.


Regarding the $1^{st}$ term we need to show that
\begin{eqnarray*}&&E\left[\frac{1}{\sqrt{k}}\sum_{i=1}^{\lfloor s/h \rfloor nh}
\Bigl(1_{[U_i\leq ku_1/(b_{X,j}n), V_i\leq kv_1/(b_{Y,j}n)]}
 -C_i\left(\frac{c_X(i/n)ku_1}{b_{X,j}n},\frac{ c_Y(i/n)kv_1}{b_{Y,j}n}\right)\Bigr) \right.
\\&&\left.\cdot\,\frac{1}{\sqrt{k}}\sum_{i=1}^{\lfloor t/h \rfloor nh}
\Bigl(1_{[U_i\leq ku_2/(b_{X,j}n), V_i\leq kv_2/(b_{Y,j}n)]}
-C_i\left(\frac{c_X(i/n)ku_2}{b_{X,j}n}, \frac{c_Y(i/n)kv_2}{b_{Y,j}n}\right)\Bigr)
\right]
\\&&\qquad\qquad\to \int_0^{s\wedge t}R(u_1\wedge u_2,v_1\wedge v_2; w)dw.
\end{eqnarray*}
The left-hand side of this expression is equal to
\begin{eqnarray*}&&\frac{1}{k}\sum_{i=1}^{\lfloor  (s\wedge t)/h \rfloor nh}\E\Bigl(1_{[U_i\leq ku_1/(b_{X,j}n), V_i\leq kv_1/(b_{Y,j}n)]}-C_i\left(\frac{c_X(i/n)ku_1}{b_{X,j}n},\frac{ c_Y(i/n)kv_1}{b_{Y,j}n}\right)\Bigr)\\&&\qquad\qquad \cdot
\Bigl(1_{[U_i\leq ku_2/(b_{X,j}n), V_i\leq kv_2/(b_{Y,j}n)]}-C_i\left(\frac{c_X(i/n)ku_2}{b_{X,j}n}, \frac{c_Y(i/n)kv_2}{b_{Y,j}n}\right)\Bigr)\\
&&=\frac{1}{n}\sum_{i=1}^{\lfloor  (s\wedge t)/h \rfloor nh}\Bigl[(n/k)C_i\left(\frac{c_X(i/n)k(u_1\wedge u_2)}{b_{X,j}n},
\frac{c_Y(i/n)k(v_1\wedge v_2)}{b_{Y,j}n}\right)
\\&&\qquad-(n/k)C_i\left(\frac{c_X(i/n)ku_1}{b_{X,j}n},
\frac{c_Y(i/n)kv_1}{b_{Y,j}n}\right)\cdot C_i\left(\frac{c_X(i/n)ku_2}{b_{X,j}n},
\frac{c_Y(i/n)kv_2}{b_{Y,j}n}\right)\Bigr]\\[5 pt]
&&=\left(\frac{1}{n}\sum_{i=1}^{\lfloor  (s\wedge t)/h \rfloor nh}R(c_X(i/n)(u_1\wedge u_2)/b_{X,j}, c_Y(i/n)(v_1\wedge v_2)/b_{Y,j}; i/n)\right)+o(1)\\
&&=\left(\frac{1}{n}\sum_{i=1}^{\lfloor  (s\wedge t)/h \rfloor nh}R(u_1\wedge u_2, v_1\wedge v_2; i/n)\right)+o(1)
\to \int_0^{s\wedge t}R(u_1\wedge u_2,v_1\wedge v_2; w)dw.
\end{eqnarray*}

Now, as mentioned above, we conclude with the   convergence to the $8^{th}$ term:
\begin{eqnarray*}
   &&\E\left[\frac{1}{\sqrt{k}}\sum_{i=1}^{ \lfloor \frac{s}{h} \rfloor nh}R_1(u_1,v_1;s_j) \left(1_{[U_i\leq ku_1/(b_{X,j}n)]}-\frac{c_X(i/n)ku_1}{b_{X,j}n}\right)\right.\\&&\qquad\qquad\left.
\cdot
\frac{1}{\sqrt{k}}\sum_{i=1}^{ \lfloor \frac{t}{h} \rfloor nh}R_2(u_2,v_2;s_j) \left(1_{[V_i\leq kv_2/(b_{Y,j}n)]}-\frac{c_Y(i/n)kv_2}{b_{Y,j}n}\right)\right]\\
&&=\frac{1}{k}\sum_{i=1}^{\lfloor \frac{s\wedge t}{h} \rfloor nh}R_1(u_1,v_1;s_j)R_2(u_2,v_2;s_j)\\
&&\qquad\qquad\cdot \,\E\left(1_{[U_i\leq \frac{ku_1}{b_{X,j}n}]}-\frac{c_X(i/n)ku_1}{b_{X,j}n}\right)\left(1_{[V_i\leq \frac{kv_2}{b_{Y,j}n}]}-\frac{c_Y(i/n)kv_2}{b_{Y,j}n}\right)\\
&&=\frac{1}{n}\sum_{i=1}^{\lfloor \frac{s\wedge t}{h} \rfloor nh}R_1(u_1,v_1;s_j)R_2(u_2,v_2;s_j)\cdot\frac{n}{k}C_i\left(\frac{c_X(i/n)ku_1}{b_{X,j}n},\frac{c_Y(i/n)kv_2}{b_{Y,j}n}\right)+o(1)\\
&&=\frac{1}{n}\sum_{i=1}^{\lfloor \frac{s\wedge t}{h} \rfloor nh}R_1(u_1,v_1;s_j)R_2(u_2,v_2;s_j)\cdot R\left(\frac{c_X(i/n)u_1}{b_{X,j}}, \frac{c_Y(i/n)v_2}{b_{Y,j}}, s_j\right)+o(1)\\
&&=\frac{1}{n}\sum_{i=1}^{\lfloor \frac{s\wedge t}{h} \rfloor nh}R_1(u_1,v_1;s_j)R_2(u_2,v_2;s_j)R(u_1, v_2; s_j)+o(1)\\
&&=h\sum_{j=1}^{\lfloor  (s\wedge t)/h \rfloor}R_1(u_1,v_1;s_j)R_2(u_2,v_2;s_j)R(u_1, v_2; s_j)+o(1)\\
&&\to\int_0^{s\wedge t}R_1(u_1,v_1; w)R_2(u_2, v_2; w)R(u_1,v_2; w)dw.
\end{eqnarray*}

This completes the proof of the convergence of the finite-dimensional distributions of~$L_n$ and hence of $\widetilde T_1+\widetilde T_4$.


For the proof of the asymptotic tightness of $\widetilde T_1+\widetilde T_4$ we use Theorem 3 in \citet{es21}; we refer the reader to statement of the theorem therein and note that certain conditions (21), (22), and (23) have to be verified. Write $j(s)= \lceil s/h \rceil \wedge m$. The $Z_{n,i}$ therein are
defined by 
\begin{eqnarray*}
&&\!\!\!  \!\!\! Z_{n,i}(u,v;s):=\frac{1}{\sqrt{k}}1_{[U_i\leq \frac{ku}{b_{X,j}n}, V_i\leq \frac{kv}{b_{Y,j}n}]}\Bigl(1_{[i\leq (j(s)-1)nh]}
+\left(\frac{s}{h}-(j(s)-1)\right)
1_{[(j(s)-1)nh <i \leq j(s)nh ]}\Bigr)
\\
&&\qquad -\frac{1}{\sqrt{k}}R_1(u,v;s_j) 1_{[U_i\leq \frac{ku}{b_{X,j}n}]}\Bigl(1_{[i\leq (j(s)-1)nh]}+\left(\frac{s}{h}-(j(s)-1)\right) 
1_{[(j(s)-1)nh <i \leq j(s)nh]}\Bigr)
\\&&\qquad-\frac{1}{\sqrt{k}}R_2(u,v;s_j) 1_{[V_i\leq \frac{kv}{b_{Y,j}n}]}\Bigl(1_{[i\leq (j(s)-1)nh]}+\left(\frac{s}{h}-(j(s)-1)\right) 
1_{[(j(s)-1)nh <i \leq j(s)nh]}\Bigr)\, \\
&&\qquad\qquad=:Z_{n,i,1}(u,v;s)-Z_{n,i,2}(u,v;s)-Z_{n,i,3}(u,v;s).
\end{eqnarray*}
 In the sequel we will tacitly use that 
 $$\lim_{n\to \infty}\max_{1\leq j\leq m} \max_{i\in I_j}\left|\frac{c_X(i/n)}{b_{X,j}} -1\right|=0$$ and a similar statement for the $Y$-coordinate.

The first condition, (21), follows easily since for all $(u,v,s)\in [0,T]^2 \times [0,1]$, $|Z_{n,i}(u,v;s)| \leq 4/\sqrt{k}\to 0$, as $n\to \infty$, meaning that the indicator in (21)  is 0, for large $n$.
For the conditions (22) and (23) we take the covering of  $\mathcal{F}=[0,T]^2\times [0,1]$  with elements $[m_1\varepsilon^4, (m_1+1)\varepsilon^4]\times [m_2\varepsilon^4, (m_2+1)\varepsilon^4]\times [m_3\varepsilon^2, (m_3+1)\varepsilon^2], m_1, m_2\in \{0, 1, \dots, \lfloor T/\varepsilon^4 \rfloor\},  m_3\in \{0, 1, \dots, \lfloor 1/\varepsilon^2 \rfloor\}.$
Clearly condition (23) is amply satisfied now. It remains to consider (22). Since $(a+b+c)^2\leq 3(a^2+ b^2+c^2)$, it suffices to show (22) separately  for each term in the definition of $Z_{n,i}$. We have to show upper bounds of the form  $c\varepsilon^2$, for some $c>0$,  for certain expressions. 

First consider $Z_{n,i,1}(u,v;s)$
For an element of the covering, a little hyperrectangle $H$, write $u_1, v_1, t_1$ for the lower bounds of $u,v,s$ in $H$ and  $u_2, v_2, t_2$ for the upper bounds.
Then \begin{eqnarray*}
&&\sum_{i=1}^n \E\sup_{(u,v,s),(\tilde u, \tilde v, \tilde s) \in H}|Z_{n,i,1}(u,v;s)-Z_{n,i,1}(\tilde u,\tilde v;\tilde  s)|^2\\
&&=\sum_{i=1}^n \E|Z_{n,i,1}(u_2,v_2;t_2)-Z_{n,i,1}(u_1,v_1;t_1)|^2\\
&&\leq \sum_{i=1}^n \E \left[\frac{1}{\sqrt{k}}1_{[\frac{ku_1}{n} < b_{X,j}U_i\leq \frac{ku_2}{n} \text{ or } \frac{kv_1}{n}< b_{Y,j}V_i\leq \frac{kv_2}{n}]}\right.
\\&&\qquad\qquad\qquad\cdot\left(1_{[i\leq (j(t_1)-1)nh]}
+(t_1/h-(j(t_1)-1)) 
1_{[(j(t_1)-1)nh <i \leq j(t_1)nh ]}\right)
\\
&&\qquad\qquad+\frac{1}{\sqrt{k}}1_{[b_{X,j}U_i\leq \frac{kT}{n}, b_{Y,j} V_i\leq \frac{kT}{n}]}\\&&\qquad\qquad\cdot
\left(1_{[(j(t_1)-1)nh<i\leq (j(t_2)-1)nh]}\right.
 +(t_2/h-(j(t_2)-1)) 
1_{[(j(t_2)-1)nh <i \leq j(t_2)nh ]}
\\&&\qquad\qquad\qquad\left.\left. - (t_1/h-(j(t_1)-1)) 
1_{[(j(t_1)-1)nh <i \leq j(t_1)nh}
\right)  
\right]^2\\
&&\leq 2\left(\frac{1}{k}\frac{k}{n} 3\varepsilon^4 \cdot 4n+\frac{1}{k}\frac{2kT}{n}\frac{4}{h}\varepsilon^2nh \right)\leq 8(3+2T)\varepsilon^2,
\end{eqnarray*}
where we have used $(a+b)^2\leq 2(a^2+ b^2)$ 
for the last inequality. This shows (22) for $Z_{n,i,1}$. It remains to show (22) for $Z_{n,i,2}$ and $Z_{n,i,3}$. Since these terms are very similar we can confine ourselves to $Z_{n,i,2}$ only.

Note that $s_j$ is determined by the index $i$.
Recall that for an element of the covering $H$, we write $u_1, v_1, t_1$ for the lower bounds of $u,v,s$ in $H$ and  $u_2, v_2, t_2$ for the upper bounds. 
Write $R_1^+=\sup_{(u,v,s)\in H} R_1(u,v; s_j)$
and  $R_1^-=\inf_{(u,v,s)\in H} R_1(u,v; s_j)$; note that $s$ plays no role when taking the sup or inf. Also recall that $R_1\leq 1$. We have
\begin{eqnarray*}
&&\sum_{i=1}^n \E\sup_{(u,v,s),(\tilde u, \tilde v, \tilde s) \in H}
|Z_{n,i,2}(u,v;s)-Z_{n,i,2}(\tilde u,\tilde v;\tilde s)|^2\\
&&\leq \sum_{i=1}^n \E \left(\frac{1}{\sqrt{k}} (R_1^+-R_1^-)
{1_{[b_{X,j} U_i\leq ku_1/n]}}\right.\\ &&\qquad\qquad\cdot\, \Bigl(1_{[i\leq (j(t_1)-1)nh]}
+(t_1/h-(j(t_1)-1)) 
1_{[(j(t_1)-1)nh <i \leq j(t_1)nh ]}\Bigr)\\
&&\qquad+\frac{1}{\sqrt{k}}1_{[\frac{ku_1}{n} <b_{X,j} U_i\leq \frac{ku_2}{n}]}\Bigl(1_{[i\leq (j(t_1)-1)nh]}+\left(\frac{t_1}{h}-(j(t_1)-1)\right) 
1_{[(j(t_1)-1)nh <i \leq j(t_1)nh ]}\Bigr)\\
&&+\frac{1}{\sqrt{k}}1_{[b_{X,j} U_i\leq \frac{kT}{n}]}
    \left(1_{[(j(t_1)-1)nh<i\leq (j(t_2)-1)nh]}+\left(\frac{t_2}{h}-(j(t_2)-1)\right)
1_{[(j(t_2)-1)nh <i \leq j(t_2)nh ]}\right.
\\&&\qquad\qquad\qquad\left. - (t_1/h-(j(t_1)-1)) 
1_{[(j(t_1)-1)nh <i \leq j(t_1)nh}
\right)  \big)^2.
\end{eqnarray*}
Using the Lipschitz continuity of $R_1$ and its homogeneity of order 0, we obtain for  $u_1\geq \varepsilon^2$, 
\begin{eqnarray*}&&R_1^+-R_1^-=\sup_{(u,v,s)\in H} R_1(1,v/u; s_j) -\inf_{(u,v,s)\in H} R_1(1,v/u; s_j)\\&&=\sup_{(u,v,s), (\tilde u, \tilde v, \tilde s)\in H} R_1(1,v/u; s_j) - R_1(1,\tilde v/\tilde u; s_j)\\
&&\leq\sup_{(u,v,s), (\tilde u, \tilde v, \tilde s)\in H} K\left|\frac{v}{u}-\frac{\tilde v}{\tilde u}\right|
=K\left(\frac{v_2}{u_1}-\frac{v_1}{ u_2}\right)\\
&&=\frac{K}{u_1}\left(v_2-v_1+v_1\left(\frac{u_2-u_1}{u_2}\right)\right)
\leq \frac{K}{u_1}(\varepsilon^4+T\varepsilon^4/\varepsilon^2)=\frac{K(T+1)}{\sqrt{u_1}}\varepsilon.
\end{eqnarray*}
Hence, taking $K\geq 1$,
\begin{eqnarray*}
\E(R_1^+-R_1^-)^21_{[b_{X,j} U_i\leq \frac{ku_1}{n}]}\leq \max\bigl(2k\varepsilon^2/n,\, 2k/n\cdot K^2(T+1)^2\varepsilon^2\bigr)\leq 2k/n\cdot K^2(T+1)^2\varepsilon^2.
\end{eqnarray*} 
Now using again $(a+b+c)^2\leq 3(a^2+ b^2+c^2)$, we find similarly as for $Z_{n,i,1}$ that
\begin{eqnarray*}
&&\sum_{i=1}^n \E\sup_{(u,v,s),(\tilde u, \tilde v, \tilde s) \in H}|Z_{n,i,2}(u,v;s)-Z_{n,i,2}(\tilde u,\tilde v;\tilde s)|^2\\
&&\leq3\left(\frac{1}{k}\frac{2k}{n}K^2(T+1)^2\varepsilon^2\cdot 4 n+\frac{1}{k}\frac{k}{n}2\varepsilon^4 \cdot 4n+\frac{1}{k}\frac{2kT}{n}\frac{4}{h}\varepsilon^2nh\right)\\&&\leq 24(K^2(T+1)^2 +1+T)\varepsilon^2.
\end{eqnarray*}
This shows (22) for $Z_{n,i,2}$.
\end{proof}
\section{Simulation Study}
\label{sec:simulation}
We conduct a Monte Carlo simulation study to assess the finite-sample performance of the ``integrated'' estimator and the two testing procedures based on the
supremum-type statistic
and the Cram\'{e}r--von Mises-type statistic,
both defined in Corollary~\ref{corollary:global estimator no trend}. 
We denote the test statistics with $   T_n^{\rm sup} $ and  $
   T_n^{\rm CvM}$ respectively, and take $T=1$.

\subsection{Data generating process}

We simulate a sample $(X_i, Y_i)$, $i = 1, \ldots, n$, satisfying the model in our setup. One key element used in the simulation is the Gumbel copula which leads to the logistic tail copula: for $\theta \in (0,1]$,
\begin{equation}
    \label{eq:sim_logistic}
    R_\theta(u,v) = u + v - \left(u^{1/\theta} + v^{1/\theta}\right)^{\theta},
    \quad u, v \geq 0,
\end{equation}
with tail-dependence coefficient $R_\theta(1,1) = 2 - 2^\theta$. Let $(U_i, V_i)$, $i = 1, \ldots, n$, be an i.i.d.\ sample from the
bivariate logistic distribution with parameter $\theta$. Then each marginal distribution is the standard Fr\'{e}chet(1) distribution, $F_0(x) = \exp(-1/x)$, $x > 0$ and the copula follows the Gumbel copula.

Under the null hypothesis
with constant tail copula, i.e., 
$H_0\colon R(u,v;s) \equiv R_\theta(u,v)$, for given $\theta$ and $s \in [0,1]$, the observations are then transformed by the heteroscedastic scaling
\begin{equation}
    \label{eq:sim_transform}
    X_i = c_X(s_i)\,U_i, \qquad Y_i = c_Y(s_i)\,V_i, \quad s_i = i/n
\end{equation}
Since the copula is invariant under strictly increasing marginal transformations, the tail
copula of $(X_i, Y_i)$ is  $R_\theta$.
Moreover, the scaling by $c_X(s_i)$ and $c_Y(s_i)$ realizes the scedasis functions of each marginal, respectively.
The sample $(X_i, Y_i)$ thus satisfies the conditions in the null hypothesis, with potentially different marginal scedasis functions.

We consider three specifications for the scedasis pair $(c_X, c_Y)$:
\begin{description}
    \item[M1 (Constant).]  $c_X(s) = c_Y(s) = 1$.
    \item[M2 (Linear).]    $c_X(s) = 0.8 + 0.4s$ and $c_Y(s) = 1.5 - s$.
    \item[M3 (Periodic).]  $c_X(s) = 1 + 0.6\cos(2\pi s)$ and
                           $c_Y(s) = 1 + 0.4\sin(2\pi s)$.
\end{description}
All three examples satisfy Condition~\ref{eq:condition smooth of c and R}.
The purpose of M2 and M3 is to examine whether non-trivial marginal heteroscedasticity
affects the behavior, as the theory predicts it should not. We consider $\theta \in \{0.5, 0.9\}$, corresponding to tail-dependence coefficients
$0.59$ (moderate) and $0.13$ (weak).
Combined with M1--M3, this yields six configurations.

In the general case of a varying tail copula, we generate samples of $(X_i, Y_i)$ in which the tail copula
varies smoothly with $s$. Specifically, we design the following function
\begin{equation}
    \label{eq:sim_alternative}
    R(u,v;s) = f(s)\,R_\theta(u,v) ,
\end{equation}
where $R_\theta(u,v)$ is the logistic tail copula as in~\eqref{eq:sim_logistic}, and $f(s)$ is a weight function that mixes this tail copula with tail independence, thus creating a smoothly varying tail dependence structure across $s$.

For the weight function $f(s)$, we use the cubic transition
\begin{equation}
    \label{eq:sim_mixing}
    f(s) = 1 - \frac{0\vee(s - \lambda)^3}{(1 - \lambda)^3}, \quad s \in [0, 1],
\end{equation}
where $\lambda\in (0,1]$ is a parameter. 
For $s \leq \lambda$, $f(s)=1$, i.e. the observations following the logistic tail copula; the dependence then
decreases smoothly, reaching the level of $0$ at $s=1$, i.e. tail independence.
The parameter $\lambda \in [0,1]$ controls the onset: large $\lambda$ implies a late,
short transition (and hence a small departure from $H_0$). 
Note that, for $\lambda = 1$, we define $f\equiv 1$ which corresponds to the null hypothesis $R(u,v;s) = R_\theta(u,v)$. 
Lastly, we remark that  the $f$ function, $R(u,v;s)$ satisfies Condition~\ref{eq:condition partial derivatives of R}.

To simulate observations following such time varying tail copulas, we use a mixture idea. First, draw
$(U_i, V_i)$ from a position-dependent Bernoulli mixture: with probability $f(s_i)$,
$(U_i, V_i)$ follows the bivariate logistic distribution;
with the complementary probability $1 - f(s_i)$, $U_i$ and $V_i$ are drawn independently
from $F_0$. After that, we transform the observations by the same heteroscedastic scaling as in~\eqref{eq:sim_transform}, with the same three scedasis specifications
M1--M3 as defined before.

\subsection{The integrated tail copula estimator}
We examine the finite-sample
behaviour of the integrated tail copula estimator $\widehat{I}_R(u,v;s)$
itself.  We start with the estimator at the single endpoint $(u,v,s)=(1,1,1)$, for which a
closed-form asymptotic distribution is available.

We use the general data generating process, with $\theta = 0.5$.  The marginal
scedasis functions are constant (M1).  We vary
$\lambda \in \{0,\,\tfrac{1}{3},\,\tfrac{2}{3},\,1\}$: recall that a larger $\lambda$
implies a later onset of the transition.
We use $M = 200$ replications, with sample size $n=10000$. The tuning parameters are chosen as $h = 1/10$ and  $k = 200$.

The true value of the average tail dependence coefficient is
\[
 I_R(1) =I_R(1,1;1) = \lambda_\theta\,(3+\lambda)/4, \qquad
  \lambda_\theta = 2 - 2^\theta = 2 - \sqrt{2}.
\]
Theorem \ref{theorem:global estimator for multivariate} yields that the normalised estimator satisfies
\[
  \sqrt{k}\,\bigl(\widehat{I}_R(1) - I_R(1)\bigr)
  \xrightarrow{d} N(0,\sigma^2(1)),
\]
where $\sigma^2(1) = \int_0^1 f(t)\bigl[\lambda_\theta(1-4f(t)r+2f^2(t)r^2)
+ 2f^2(t)r^2\bigr]\,dt$ and $r = \lambda_\theta/2$ is (one of) the
partial derivative(s) of $R_\theta$ at $(1,1)$.

Figure~\ref{fig:estimation} displays, for each $\lambda$, a histogram of the
$M = 200$ normalised estimates overlaid with the standard normal density.
The histograms are centred near zero and closely track the $N(0,1)$ curve
across all four panels.  Results with
$n = 5000$ are qualitatively identical, with slightly higher bias of at most $3\%$.  These results confirm the stated asymptotic normality
of $\widehat{I}_R(1,1;1)$ under non-stationarity.

\begin{figure}[htbp]
\centering
\includegraphics[width=0.7\textwidth]{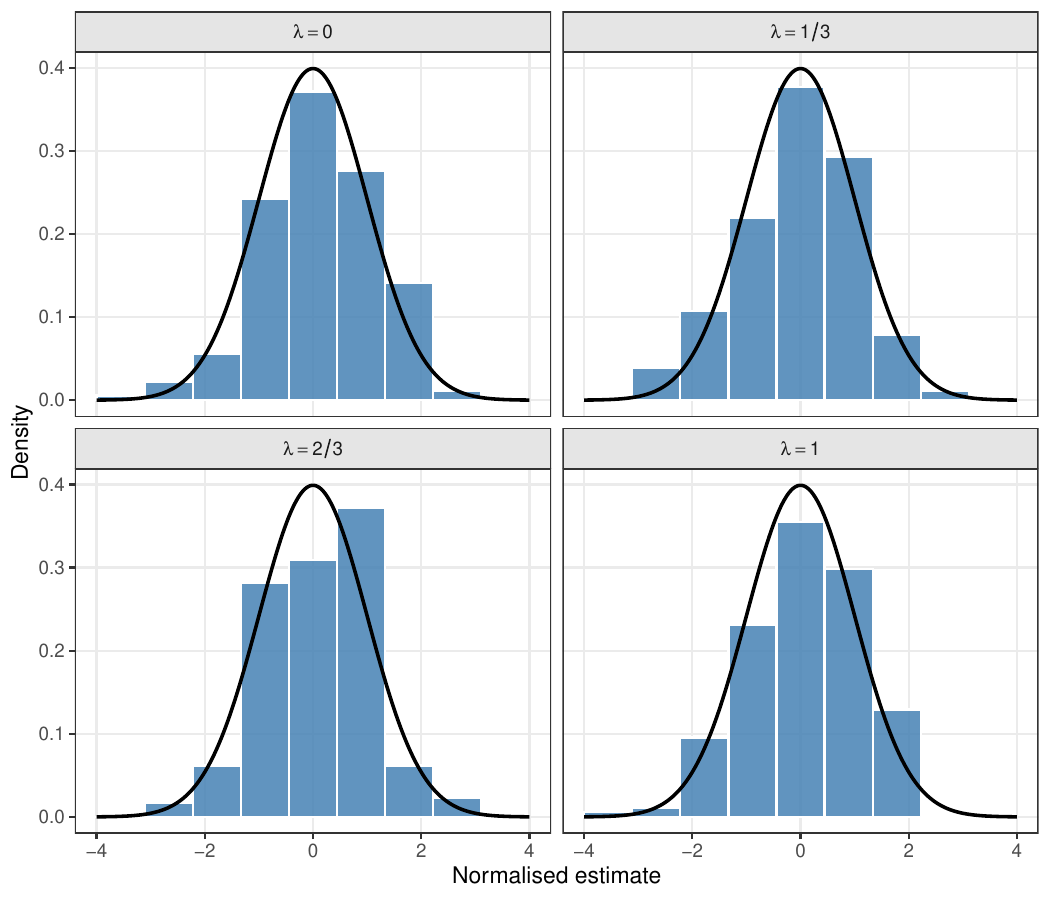}
\caption{Histograms of the normalised estimator
$\sqrt{k}\,(\widehat{I}_R(1)-I_R(1))/\sigma(1)$ across $M=200$
replications, overlaid with the $N(0,1)$ density (solid curve).
Each panel corresponds to one value of $\lambda$.
Parameters: $n=10000$, $k=200$, $h=1/10$, $\theta=0.5$, M1 (constant)
marginals.}
\label{fig:estimation}
\end{figure}

We next examine the estimation of the full curve $I_R(s)=I_R(1,1;s)$ as a function of $s$. We fix $\lambda = 0$, giving true curve $I_R(s) = \lambda_\theta(s - s^4/4)$. We use $n = 5000$, $k = 200$, $h = 1/10$, and all three marginal
specifications M1--M3. Figure~\ref{fig:estimation_curve} shows, for each marginal, the pointwise mean of $\widehat{I}_R(s)$ across $M = 200$ replications together with the empirical $95\%$ band and the true curve. The pointwise mean tracks the truth closely across
$s$, and the results are nearly identical across M1--M3, confirming that
marginal heteroscedasticity does not affect the estimation of $I_R$.

\begin{figure}[htbp]
\centering
\includegraphics[width=\textwidth]{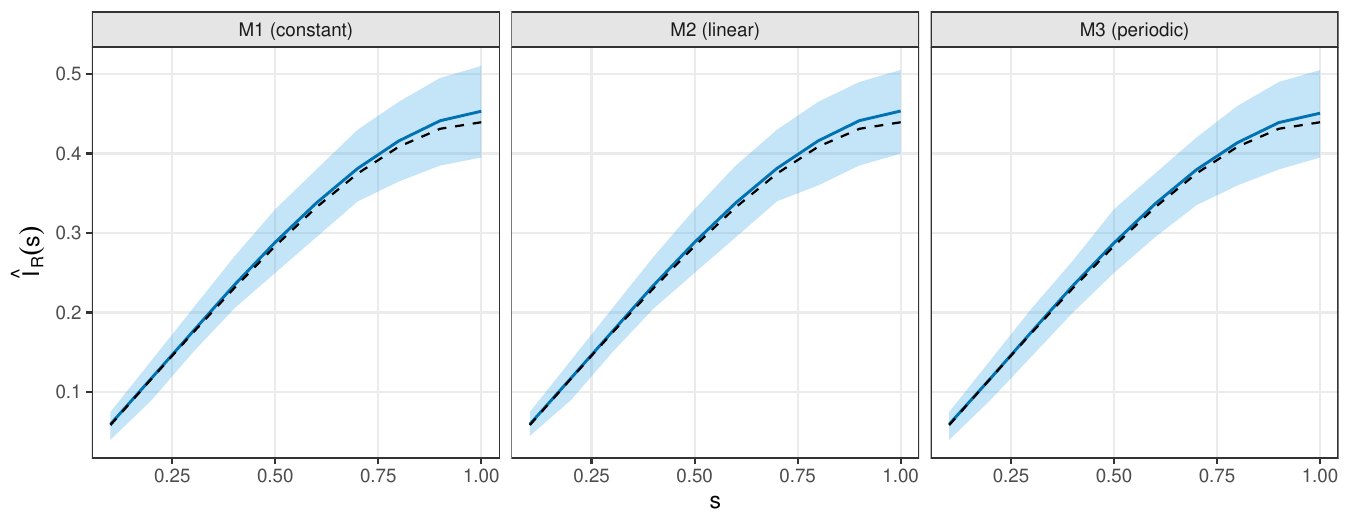}
\caption{Estimated curve $\widehat{I}_R(s)$ (solid) with pointwise empirical
$95\%$ band (shaded) and true curve (dashed) across $M=200$ replications.
Each panel corresponds to one marginal specification.
Parameters: $n=5{,}000$, $k=200$, $h=1/10$, $\theta=0.5$, $\lambda=0$.}
\label{fig:estimation_curve}
\end{figure}
\subsection{Size Analysis}
For the size study, we use the data generating process under the null hypothesis. For taking the supremum or integral, we evaluate the estimators on the discrete grid
$\mathcal{G} = \{0.1, \ldots, 1.0\}^2 \times \{h, 2h, \ldots, 1\}$,
with the integral in $T_n^{\rm CvM}$ replaced by the corresponding Riemann sum.
In both designs below, critical values are obtained by simulating the limiting Gaussian
bridge $\widehat{B}$ conditional on $\widehat{R}$, as in
Corollary~\ref{corollary:global estimator no trend}:
we generate $B = 1000$ independent realizations of $\widehat{B}$, compute the
respective functional of each, and take the empirical 95th percentile as the critical value.
All tests are at nominal level $\alpha = 0.05$.

For each configuration, we simulate $M = 200$ independent
samples of size $n = 5000$.
Both tests are applied to each sample. For the tuning parameters $(k,h)$, we use all possible combinations among
$k \in \{100, 200, 300, 500, 700, 1000\}$ and $h \in \{1/10, 1/15, 1/20\}$,
so that sensitivity to the tuning parameters is assessed.

Figure~\ref{fig:design1_cvm}
displays the empirical rejection
rates as a function of $k$ for the 
CvM test; the results for the supremum test are very similar and hence omitted. 
We show the results for all six configurations in six subfigures. Within each subfigure, the three line types correspond to the three bandwidth choices for $h$.

From this figure we obtain that, both tests achieve the desired size for $k \geq 300$, with rejection rates typically in $[0.02,\,0.06]$. At $k = 100$ there is noticeable underrejection in the left panels, reflecting the limited effective sample size. This underrejection diminishes as $k$ grows, consistent with the asymptotic theory. Among the bandwidth choices, $h = 1/10$ provides the most stable size across $k$, while the other two choices provide virtually indifferent results.
The results are nearly identical across M1--M3, confirming that heteroscedastic
margins do not distort the testing procedure. Variation between $\theta = 0.5$ and $\theta = 0.9$ is also small, demonstrating that the tests maintain size across different levels of tail dependence.


\begin{figure}[h]
\centering
\begin{subfigure}[b]{0.47\textwidth}
    \includegraphics[width=\textwidth]{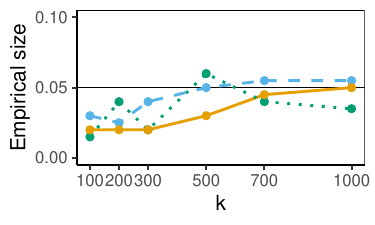}
    \caption{$\theta=0.5$, M1 (constant)}
\end{subfigure}
\hfill
\begin{subfigure}[b]{0.47\textwidth}
    \includegraphics[width=\textwidth]{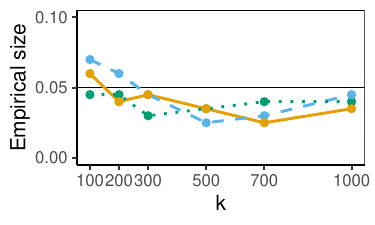}
    \caption{$\theta=0.9$, M1 (constant)}
\end{subfigure}
\\[0.5em]
\begin{subfigure}[b]{0.47\textwidth}
    \includegraphics[width=\textwidth]{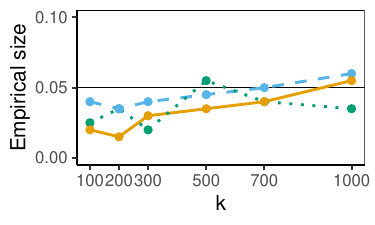}
    \caption{$\theta=0.5$, M2 (linear)}
\end{subfigure}
\hfill
\begin{subfigure}[b]{0.47\textwidth}
    \includegraphics[width=\textwidth]{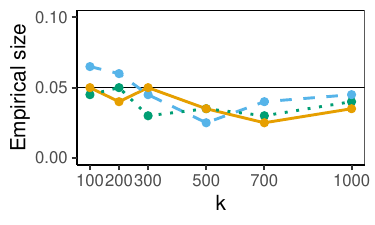}
    \caption{$\theta=0.9$, M2 (linear)}
\end{subfigure}
\\[0.5em]
\begin{subfigure}[b]{0.47\textwidth}
    \includegraphics[width=\textwidth]{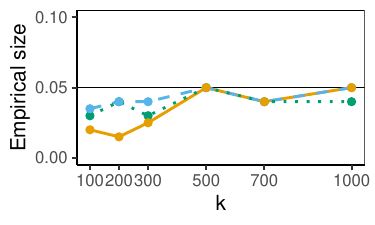}
    \caption{$\theta=0.5$, M3 (periodic)}
\end{subfigure}
\hfill
\begin{subfigure}[b]{0.47\textwidth}
    \includegraphics[width=\textwidth]{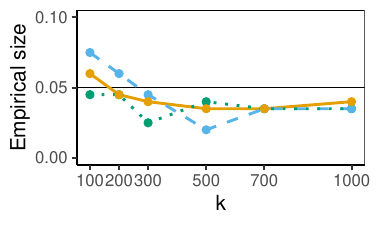}
    \caption{$\theta=0.9$, M3 (periodic)}
\end{subfigure}
\caption{Empirical size of the Cram\'{e}r--von Mises test ($T_n^{\rm CvM}$) as a function
of $k$, at nominal level $\alpha = 0.05$ (horizontal line).
Solid: $h = 1/10$; dashed: $h = 1/15$; dotted: $h = 1/20$.
Based on $M = 200$ replications, $n = 5000$.}
\label{fig:design1_cvm}
\end{figure}

\subsection{Power Analysis}
For the power analysis, we use the data generating process based on the mixture idea with $\theta=0.5$. We simulate  $M = 200$ samples with sample size
$n = 5000$.

Based on the size results, for $n=5000$, we set $k = 1000$ and $h = 1/20$ throughout. The power analysis focuses on the effect of $\lambda$ and the impact of marginal skedasis functions.
We consider $\lambda \in \{0.70, 0.75, 0.80, 0.85, 0.90, 0.95, 1.00\}$. For each $(\lambda, \text{skedasis})$ configuration. 

\begin{figure}[htbp]
\centering
\begin{subfigure}[b]{0.47\textwidth}
    \includegraphics[width=\textwidth]{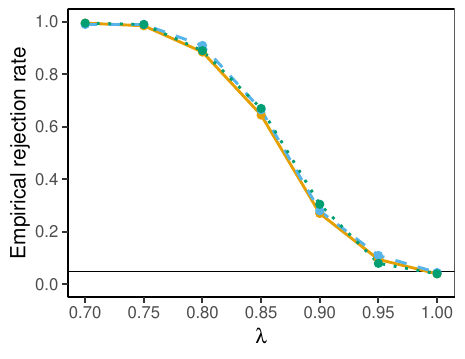}
    \caption{Supremum test ($T_n^{\rm sup}$)}
    \label{fig:design2_sup}
\end{subfigure}
\hfill
\begin{subfigure}[b]{0.47\textwidth}
    \includegraphics[width=\textwidth]{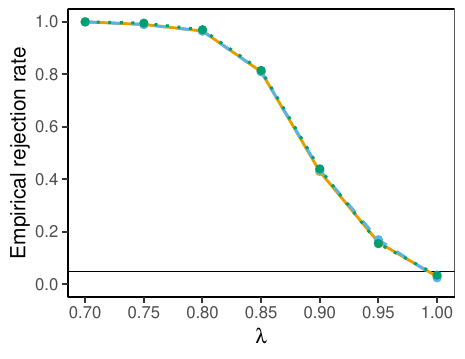}
    \caption{Cram\'{e}r--von Mises test ($T_n^{\rm CvM}$)}
    \label{fig:design2_cvm}
\end{subfigure}
\caption{Empirical power as a function of $\lambda$,
at nominal level $\alpha = 0.05$ (horizontal line).
Fixed $k = 1000$, $h = 1/20$, $\theta = 0.5$.
Solid: M1 (constant); dashed: M2 (linear); dotted: M3 (periodic).
At $\lambda = 1$ the tail copula is constant ($f \equiv 1$), a null scenario.
Based on $M = 200$ replications, $n = 5000$.}
\label{fig:design2}
\end{figure}

Figure~\ref{fig:design2} shows the empirical power as a function of $\lambda$.
Both tests are highly powerful for small $\lambda$: rejection rates exceed $95\%$ for
$\lambda \leq 0.75$ across all marginal specifications.
Power decreases smoothly toward the nominal $5\%$ as $\lambda$ approaches~1, and at
$\lambda = 1$ the rejection rate is indeed close to the size, confirming size control in
this null scenario.

The CvM test exhibits moderately higher power than the supremum test for intermediate
$\lambda$:. For instance, at $\lambda = 0.90$, the CvM test rejects approximately $50\%$ of the time
versus roughly $35\%$ for the supremum test.
As in the size analysis, the power curves are nearly identical across M1--M3, providing
further evidence that marginal heteroscedasticity does not affect the behaviour of the
tests.

\medskip 
{\bf Acknowledgment} We are very grateful to Laurens de Haan for helpful input and discussions at the early stages of this project.

\bibliographystyle{chicago}
\bibliography{depskedasis}

@preamble{ " \providecommand{\noopsort}[1]{} " }

@article{a84,
  title={Probability inequalities for empirical processes and a law of the iterated logarithm},
  author={Alexander, Kenneth S},
  journal={The Annals of Probability},
  pages={1041--1067},
 volume={12},
  number={4},
  year={1984},
  publisher={JSTOR}
}

@article{es21,
  title={Empirical tail copulas for functional data},
  author={Einmahl, J. H. J. and Segers, Johan},
  journal={The Annals of Statistics},
  volume={49},
  number={5},
  pages={2672--2696},
  year={2021},
  publisher={JSTOR}
}

@book{BGST,
  title={Statistics of extremes: theory and applications},
  author={Beirlant, Jan and Goegebeur, Yuri and Segers, Johan and Teugels, Jozef L},
  year={2006},
  publisher={John Wiley \& Sons}
}

@book{dHF,
  title={Extreme value theory: an introduction},
   author={\noopsort{Haan}de Haan, Laurens  and Ferreira, Ana},
  
  year={2006},
  publisher={Springer}
}

@article{dHZ,
  title={Trends in extreme value indices},
  author={\noopsort{Haan}de Haan,   Laurens and Zhou, Chen},
  journal={Journal of the American Statistical Association},
  volume={116},
  number={535},
  pages={1265--1279},
  year={2021},
  publisher={Taylor \& Francis}
}

@article{EdHZ,
  title={Statistics of heteroscedastic extremes},
  author={Einmahl, J. H. J. and  de Haan, Laurens and Zhou, Chen},
  journal={Journal of the Royal Statistical Society Series B: Statistical Methodology},
  volume={78},
  number={1},
  pages={31--51},
  year={2016},
  publisher={Oxford University Press}
}

@article{Drees,
  title={Statistical inference on a changing extreme value dependence structure},
  author={Drees, Holger},
  journal={The Annals of Statistics},
  volume={51},
  number={4},
  pages={1824--1849},
  year={2023},
  publisher={Institute of Mathematical Statistics}
}

@article{EZ,
  title={Tail copula estimation for heteroscedastic extremes},
  author={Einmahl, J. H. J. and Zhou, Chen},
  journal={Econometrics and Statistics},
  year={2026},
  publisher={Elsevier}
}

@article{HH,
  title={Two sample tests for bivariate heteroscedastic
extremes with a changing tail copula},
  author={\noopsort{Haan}Hu, Yifan and Hou,  Yanxi},
  journal={Statistica Sinica},
  year={2028},
  
}

@article{EdHL,
  title={Weighted approximations of tail copula processes with application to testing the bivariate extreme value condition},
  author={Einmahl, J. H. J. and de Haan, Laurens and Li, Deyuan},
  year={2006}
}

@article{EKS,
  title={An M-estimator for tail dependence in arbitrary dimensions},
  author={Einmahl, J. H. J. and Krajina, Andrea and Segers, Johan},
  year={2012}
}
\end{document}